\theoremstyle{plain}
\newtheorem{theorem}{Theorem}
\newtheorem{proposition}{Proposition}
\newtheorem{pcorollary}{Corollary}[proposition]
\newtheorem{lemma}{Lemma}
\newtheorem{corollary}{Corollary}[theorem]
\theoremstyle{definition}
\newtheorem{example}{Example}
\newtheorem{definition}{Definition}
\begin{document}

{\large
\textbf{\textsc{Finite groups with $\mathbb{P}$-subnormal and strongly permutable subgroups}}}

\medskip

\textbf{\textsc{
V.\,S. Monakhov\footnote[1]{Scorina Gomel State University, Gomel,
Belarus; \texttt{Victor.Monakhov@gmail.com}},
I.\,L. Sokhor\footnote[2]{Brest State A.S. Pushkin University, Brest,
Belarus; \texttt{irina.sokhor@gmail.com}}
}}

\bigskip

{\footnotesize

\textbf{Abstract:} Let~$H$ be a subgroup of a group~$G$.
The permutizer $P_G(H)$ is the subgroup generated
by all cyclic subgroups of~$G$ which permute with $H$.
A subgroup~$H$ of a group~$G$ is strongly permutable in~$G$ if $P_U(H)=U$
for every subgroup~$U$ of~$G$ such that~$H\le U\le G$.
We investigate groups with $\mathbb{P}$-subnormal or
strongly permutable Sylow and primary cyclic subgroups.
In particular, we prove that groups
with all strongly permutable primary cyclic subgroups are supersoluble.

\textbf{Keywords:} finite group, permutizer, $\mathbb{P}$-subnormality, simple group, supersoluble group.

}

\section{Introduction}

All groups in this paper are finite.
A group of prime power order is called a primary group.

Let~$H$ be a subgroup of a group~$G$.
The permutizer of~$H$ in $G$ is the subgroup generated
by all cyclic subgroups of~$G$ which permute with $H$, i.\,e.
$$
P_G(H)=\left\langle x\in G\mid \langle x\rangle H=H\langle x\rangle
 \right\rangle.
$$
The permutizer $P_G(H)$ contains the normalizer $N_G(H)$, see~\cite[p.~26]{Wein1982}.
X.~Liu and Y.~Wang~\cite{LiuWang05} proved that
a group $G$ has a Sylow tower of supersoluble type
if $P_G(X)=G$ for every Sylow subgroup~$X$ of~$G$.
A.\,F.~Vasil'ev, V.\,A.~Vasil'ev and T.\,I.~Vasil'eva~\cite{VVV2014}
described the structure of a group~$G$ in which $P_Y(X)=Y$
for every Sylow subgroup~$X$ of~$G$ and every subgroup~$Y\ge X$.
They proposed the following notation.

\begin{definition}\label{dper} 
A subgroup~$H$ of a group~$G$ is

$(1)$ {\sl permutable} in~$G$ if $P_G(H)=G$;

$(2)$ {\sl strongly permutable} in~$G$ if $P_U(H)=U$
for every subgroup~$U$ of~$G$ such that~$H\le U\le G$.
\end{definition}

We note that a quasinormal subgroup is strongly permutable.
In the symmetric group~$S_n$, $n\in \{3,4,6\}$,
a Sylow 2-subgroup is strongly permutable, but it is not quasinormal.

A.\,F~Vasil'ev, V.\,A.~Vasil'ev and V.\,N.~Tyutyanov~\cite{VVT2010}
proposed the following notation.

\begin{definition}
Let $\mathbb{P}$ be the set of all primes.
A subgroup $H$ of a group $G$ is called
$\mathbb{P}$\nobreakdash-\hspace{0pt}{\sl subnormal} in~$G$
if there is a subgroup chain
$$
H=H_0\le H_1\le \ldots \le H_n=G \eqno (1)
$$
such that $|H_i:H_{i-1}|\in \mathbb{P}\cup \{1\}$ for every $i$.
\end{definition}

The class of all groups with $\mathbb{P}$-subnormal Sylow subgroups
is denoted by~$\rm{w}\mathfrak U$ and the class of all groups
with $\mathbb{P}$-subnormal primary cyclic subgroups is denoted by~$\rm{v}\mathfrak U$.
These classes are quite well studied~\cite{VVT2010,mkRic,Mon2016SMJ}.
In particular, these classes are subgroup-closed saturated formations.

In a soluble group, a $\mathbb{P}$\nobreakdash-\hspace{0pt}subnormal
Hall subgroup (in particular, a Sylow subgroup)
is strongly permutable~\cite[3.8]{VVV2014}.
We prove that in a soluble group, the converse is true, see Proposition~\ref{l4}.
As a result we obtain new criteria for the supersolubility of a group
and also~\cite[Theorem A]{czl}: {\sl a group~$G\in\mathrm{w}\mathfrak{U}$
if and only if every Sylow subgroup is $\mathbb{P}$-subnormal
or strongly permutable in~$G$.}

In the following theorem, we enumerate all simple non-abelian groups
with a $\mathbb{P}$-subnormal or strongly permutable Sylow subgroup.

\begin{theorem}
 Let $G$ be a simple non-abelian group
 and let $R$ be a Sylow $r$-subgroup of $G$.

 $(1)$ If $R$ is $\mathbb{P}$-subnormal in~$G$,
 then $r=2$ and $G$ is isomorphic to $L_2(7)$, $L_2(11)$
 or $L_2(2^m)$ and~$2^m+1$ is a prime.

 $(2)$ If $R$ is strongly permutable and $\mathbb{P}$-subnormal in~$G$,
 then $r=2$ and $G\cong L_2(7)$.
\end{theorem}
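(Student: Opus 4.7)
The strategy is to derive~(1) from the classification of simple groups possessing a maximal subgroup of prime index, and then to deduce~(2) by checking strong permutability in the three groups that survive~(1).

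For~(1), the chain in the definition of $\mathbb{P}$-subnormality immediately yields a maximal subgroup $M = H_{n-1} \ge R$ with $|G:M|$ equal to some prime $p$. The coset action gives $G \hookrightarrow S_p$, so $|G|$ divides $p!$. Guralnick's CFSG-based theorem then confines $G$ to the list: $A_p$ with $p$ prime, $L_n(q)$ with $(q^n-1)/(q-1)$ prime (this family includes $L_2(2^m)$ with $2^m+1$ prime, and $L_3(2) \cong L_2(7)$), $L_2(11)$, $M_{11}$, and $M_{23}$. Applying $\mathbb{P}$-subnormality recursively inside $M$ eliminates $A_p$ for $p \ge 7$ (since $A_{p-1}$ has no maximal subgroup of prime index), the higher linear groups, and $M_{11}, M_{23}$; the three remaining groups each admit an explicit chain of prime-index subgroups starting at a Sylow 2-subgroup, so $r = 2$.

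For~(2), part~(1) restricts attention to $L_2(7)$, $L_2(11)$, and $L_2(2^m)$ with $m \ge 2$. In $L_2(2^m)$ with $m \ge 2$, the Borel subgroup $B = N_G(R)$ is the unique maximal subgroup containing the Sylow 2-subgroup $R$, since all other maximal subgroups of $L_2(q)$ have a Sylow 2-subgroup of order at most $2^{m/k}$ or $2$. Any $x \notin B$ permuting with $R$ would force $\langle x\rangle R = G$, giving $|x| \ge (q-1)(q+1)$, which exceeds all element orders in $L_2(q)$; hence $P_G(R) = B \ne G$ and $R$ is not even permutable. In $L_2(11)$, the Sylow 2-subgroup $V_4$ lies inside some $A_5 \le G$; a direct check shows that in $A_5$ only the order-$3$ elements outside $V_4$ permute with $V_4$ (products with an order-$5$ element or with an involution outside $V_4$ would have sizes $20$ and $8$, not orders of subgroups of $A_5$), so $P_{A_5}(V_4) = A_4 \ne A_5$, violating strong permutability at $U = A_5$. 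Finally, in $L_2(7)$ the Sylow 2-subgroup $R$ has exactly four overgroups: $R$ itself, the two $S_4$-subgroups containing $R$ (one from each conjugacy class), and $G$. In each $S_4$ any element of order~$3$ permutes with $R$, giving $P_{S_4}(R) = S_4$; taking order-$3$ elements from the two $S_4$-classes shows $P_G(R) = G$. Thus $R$ is strongly permutable, and only $L_2(7)$ remains.

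The main obstacle is the classification step in~(1): it rests on the CFSG-based enumeration of simple groups with a prime-index maximal subgroup, together with a careful analysis showing that no $\mathbb{P}$-subnormality chain can be completed outside the three listed families. Once~(1) is in hand, part~(2) reduces to three concrete permutizer computations in small groups.
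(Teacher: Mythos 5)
Your overall strategy for part~(1) --- reduce via CFSG to a short list of simple groups possessing a prime-index maximal subgroup and then eliminate cases --- is the same in spirit as the paper's, and your part~(2) computations in $L_2(2^m)$, $L_2(11)$ and $L_2(7)$ match the paper's almost verbatim (unique maximal overgroup $N_G(R)$ in $L_2(2^m)$; failure of permutability already in the $A_5\cong L_2(5)$ overgroup for $L_2(11)$; the two non-conjugate copies of $S_4$ generating $L_2(7)$). The difference lies in which classification you invoke, and it matters. The paper does not stop at ``$G$ has a subgroup of prime index'': since $R$ is an $r$-group, the trivial subgroup is $\mathbb{P}$-subnormal in $R$ and hence, by transitivity, in $G$, so $G$ admits a \emph{full} chain $1=K_0\le K_1\le\dots\le K_m=G$ with every index prime. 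Simple groups with this much stronger property are classified (Kazarin; Cameron--Solomon): only $L_2(7)$, $L_2(11)$, $L_3(3)$, $L_3(5)$ and $L_2(2^m)$ with $2^m+1$ prime occur, leaving five concrete cases.

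Your route through Guralnick's theorem leaves the infinite family $L_n(q)$ with $(q^n-1)/(q-1)$ prime, together with $A_p$, $M_{11}$, $M_{23}$, and the proposed elimination --- ``applying $\mathbb{P}$-subnormality recursively inside $M$'' --- provably cannot dispose of all of them. The groups $L_3(3)$ and $L_3(5)$ are counterexamples to that mechanism: their prime-index parabolics $3^2{:}GL_2(3)$ and $5^2{:}GL_2(5)$ are soluble, so the trivial subgroup \emph{is} $\mathbb{P}$-subnormal in them and a full chain of prime indices from $1$ to $G$ exists (both groups sit on the Kazarin/Cameron--Solomon list). No recursive prime-index test excludes them; one must show that no such chain can be made to \emph{start at a Sylow subgroup of $G$}, which the paper does via Lemma~\ref{KPMax} (the parabolic is not $3$-closed, resp.\ $5$-closed, so $r$ cannot be the largest prime of the parabolic) plus an Atlas check for the remaining Sylow subgroups. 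Relatedly, you never actually prove $r=2$ for the surviving groups: exhibiting a chain from a Sylow $2$-subgroup shows $r=2$ is possible, not that $r\ne 2$ is impossible (for $L_2(11)$, say, one still has to verify that the Sylow $3$- and $5$-subgroups are not $\mathbb{P}$-subnormal in $A_5$). These eliminations are the real content of part~(1), so as written the argument has a genuine gap; part~(2) is essentially complete once part~(1) is repaired.
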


For a primary cyclic subgroups we prove two following theorems.

\begin{theorem}
 If all primary cyclic subgroups of a group $G$ are
 strongly permutable, then~$G$ is supersoluble.
\end{theorem}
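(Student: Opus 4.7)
The plan is a minimal counterexample argument. Let $G$ be a group of minimal order for which every primary cyclic subgroup is strongly permutable yet $G$ is not supersoluble. I would first verify that the hypothesis passes to subgroups and to quotients. Subgroup closure is immediate from the definition. For a quotient $G/N$, given a primary cyclic $\langle\bar x\rangle\le G/N$, I replace $x$ by its $p$-part so that $|x|$ is already a prime power; normality of $N$ then lets one transfer any relation $\langle y\rangle\langle x\rangle=\langle x\rangle\langle y\rangle$ in $U\le G$ to the analogous relation between $\langle yN\rangle$ and $\langle x\rangle N/N$ in $U/N$, whence $P_{U/N}(\langle x\rangle N/N)=U/N$. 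Consequently every proper subgroup and every proper quotient of $G$ is supersoluble, so $G$ is a minimal non-supersoluble group.

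Next I would show $G$ is soluble. Otherwise, every proper subgroup and every proper quotient of $G$ being soluble forces $G$ to be simple non-abelian. Since cyclic subgroups of prime order are primary cyclic and hence strongly permutable in~$G$, pushing this up along a Sylow chain yields a Sylow subgroup $R$ of $G$ that is itself strongly permutable and $\mathbb P$\nobreakdash-\hspace{0pt}subnormal. Part~$(2)$ of Theorem~A then forces $G\cong L_2(7)$, but a direct inspection of $L_2(7)$ shows that its cyclic subgroup of order~$7$ is not strongly permutable, a contradiction.

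Thus $G$ is a soluble minimal non-supersoluble group, and by Doerk's classification it has a unique normal Sylow $p$-subgroup $P=G^{\mathfrak U}$ with $P/\Phi(P)$ a non-cyclic chief factor of $G$ of order $p^k$, $k\ge 2$, and cyclic $p$-complement. Proposition~\ref{l4}, applied to the Sylow subgroups of $G$ (which are strongly permutable because they are generated by the strongly permutable primary cyclic subgroups they contain), gives $G\in\mathrm{w}\mathfrak U$. For $x\in P\setminus\Phi(P)$, the strong permutability of $\langle x\rangle$ supplies inside every intermediate subgroup a generating set of cyclic permuters of $\langle x\rangle$; their images modulo $\Phi(P)$ ought to generate a proper $G$-invariant subgroup of $P/\Phi(P)$, contradicting that $P/\Phi(P)$ is a chief factor of non-prime order.

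The main obstacle is making this final reduction rigorous. One must verify that the cyclic permuters of $\langle x\rangle$ in $G$ cannot together surject, modulo $\Phi(P)$, onto the non-cyclic factor $P/\Phi(P)$. The natural route is to use that for every cyclic $\langle y\rangle$ permuting with $\langle x\rangle$ the product $\langle x\rangle\langle y\rangle$ is a product of two cyclic groups, hence supersoluble by Huppert's theorem, so that $\langle x\rangle$ is $\mathbb P$\nobreakdash-\hspace{0pt}subnormal in each $\langle x,y\rangle$; combining these local $\mathbb P$\nobreakdash-\hspace{0pt}subnormal witnesses into a single chain from $\langle x\rangle$ to $G$ yields prime-index steps whose structure conflicts with $P/\Phi(P)$ being a chief factor of order $p^k$ with $k\ge 2$.
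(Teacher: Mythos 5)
Your overall skeleton (minimal counterexample, inheritance by subgroups and quotients, reduction to a minimal non-supersoluble group, contradiction via the non-cyclic chief factor $P/\Phi(P)$) matches the paper's, but two steps are genuinely broken. First, you twice assert that a Sylow subgroup is strongly permutable ``because it is generated by the strongly permutable primary cyclic subgroups it contains.'' Neither permutability nor strong permutability is closed under joins: $P_U(\langle x\rangle)=U$ for every cyclic generator $x$ of $R$ says nothing about $P_U(R)$, since an element permuting with $\langle x\rangle$ need not permute with $R$. This unsupported claim carries both your solubility argument (via Theorem~\ref{th0}\,(2)) and your appeal to Proposition~\ref{l4} to get $G\in\mathrm{w}\mathfrak U$. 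The solubility step is in any case unnecessary: once all proper subgroups are supersoluble, $G$ is soluble by the classification of minimal non-supersoluble groups (Lemma~\ref{minu}\,(1)), which is how the paper proceeds.

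Second, the final reduction --- the actual core of the theorem --- is left as an acknowledged sketch, and the proposed repair does not work. From $\langle x\rangle\langle y\rangle$ supersoluble you do get that $\langle x\rangle$ is $\mathbb P$-subnormal in each $\langle x\rangle\langle y\rangle$, but $\mathbb P$-subnormality in each of a family of overgroups does not combine into $\mathbb P$-subnormality in their join, and there is no evident way to make the images of the permuters generate a proper $G$-invariant subgroup of $P/\Phi(P)$. The paper closes this gap quite differently: it first uses Lemma~\ref{ls}\,(3) to exclude the Schmidt case and Lemma~\ref{expp} to force $P$ to be elementary abelian and a minimal normal subgroup with $\Phi(G)=1$, then invokes the precise possibilities for the complement $S$ from Lemma~\ref{minu}\,(5) and runs an element-by-element case analysis: for $A\le P$ of order $p$ and a permuter $g=bx$ with $b\in P$, $x\in S$, the decomposition $\langle g\rangle=\langle b\rangle\times\langle x\rangle$ produces nontrivial intersections $S\cap S^{b}$ whose normal closure in $\langle S,S^{b}\rangle=G$ contradicts $O_{p'}(G)=1$ or $S_G=1$. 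Without an argument of this kind (or some substitute for it), the proof is incomplete.
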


\begin{theorem}
 If every primary cyclic subgroup of a group $G$ is
 $\mathbb{P}$\nobreakdash-\hspace{0pt}subnormal or strongly permutable,
 then~$G\in \rm{v}\mathfrak U$.
\end{theorem}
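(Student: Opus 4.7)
The plan is to argue by minimal counterexample. First I would verify that the hypothesis is inherited by subgroups (strong permutability clearly restricts to any intermediate overgroup of a primary cyclic $X$, and $\mathbb{P}$-subnormality restricts because $\mathrm{v}\mathfrak{U}$ is subgroup-closed) and by proper quotients (any primary cyclic $\overline{X}\le G/N$ is the image of the cyclic group generated by the $p$-part of a chosen preimage, which is primary cyclic in $G$, and both properties descend modulo $N$). Hence in a minimal counterexample $G$, every proper subgroup and every proper quotient lies in $\mathrm{v}\mathfrak{U}$, and in particular every proper section of $G$ is soluble.

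Next I would rule out the non-soluble case. If $G$ admits a proper non-trivial normal subgroup $N$, then both $N$ and $G/N$ belong to $\mathrm{v}\mathfrak{U}$, so $G$ is soluble as an extension of two soluble groups. Thus I may assume $G$ is simple non-abelian. Since every primary cyclic subgroup of $G$ must be $\mathbb{P}$-subnormal or strongly permutable, in particular every Sylow subgroup of $G$ contains such a subgroup. Applying Theorem~1 together with an examination of concrete prime-order cyclic subgroups in the candidates $L_2(7)$, $L_2(11)$, and $L_2(2^m)$ with $2^m+1$ prime, one should derive a contradiction by exhibiting in each case a primary cyclic subgroup that is neither $\mathbb{P}$-subnormal (any ascending chain from it to $G$ must jump by a non-prime index at some stage, since proper subgroups are soluble while $G$ is simple) nor strongly permutable (its permutizer in a suitable overgroup is proper).

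Having established that $G$ is soluble, the final step is to invoke a primary-cyclic analogue of Proposition~\ref{l4}: \emph{in a soluble group, a strongly permutable primary cyclic subgroup is $\mathbb{P}$-subnormal.} Granted this, every primary cyclic subgroup of $G$ becomes $\mathbb{P}$-subnormal, so $G\in\mathrm{v}\mathfrak{U}$, contradicting the choice of $G$. I expect the principal obstacle to lie in the simple-group reduction: Theorem~1 controls only Sylow subgroups of simple groups, whereas here I must verify that the stronger requirement on \textbf{all} primary cyclic subgroups fails in each $L_2$-type group permitted by that theorem. This requires explicit information about the embedding of cyclic prime-order subgroups (split versus non-split tori, unipotent elements) and their permutizers, rather than the global Sylow-level analysis used earlier.
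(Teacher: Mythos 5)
Your overall skeleton (minimal counterexample, inheritance by normal subgroups and quotients, elimination of the simple case, then the soluble case) is the same as the paper's, but both of your decisive steps have genuine gaps. In the simple case you cannot ``apply Theorem~\ref{th0}'': that theorem presupposes a $\mathbb{P}$-subnormal \emph{Sylow} subgroup, whereas your hypothesis concerns primary \emph{cyclic} subgroups, which need not be Sylow; knowing that every Sylow subgroup merely \emph{contains} a $\mathbb{P}$-subnormal or strongly permutable cyclic subgroup says nothing about the Sylow subgroup itself. The paper's route is different: since not every primary cyclic subgroup can be strongly permutable (Theorem~\ref{tc} would force supersolubility, contradicting simplicity), some primary cyclic subgroup is $\mathbb{P}$-subnormal, hence the trivial subgroup is $\mathbb{P}$-subnormal in~$G$, and the Kazarin/Cameron--Solomon classification yields the candidates $L_2(7)$, $L_2(11)$, $L_3(3)$, $L_3(5)$, $L_2(2^m)$ with $2^m+1$ prime. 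Note that your candidate list omits $L_3(3)$ and $L_3(5)$; they are excluded in Theorem~\ref{th0} only because they have no $\mathbb{P}$-subnormal Sylow subgroup, an exclusion you are not entitled to here. The contradiction is then obtained cheaply: in each candidate the Sylow subgroup for the largest prime is cyclic, so the hypothesis applies to it directly, and Lemma~\ref{KPMax} (if it is $\mathbb{P}$-subnormal) or Lemma~\ref{PG=NG} (if it is strongly, hence merely, permutable) forces it to be normal. No case-by-case analysis of tori, unipotent elements, or permutizers inside the $L_2$-groups is needed.

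The second gap is the soluble case. You invoke an unproven ``primary-cyclic analogue of Proposition~\ref{l4}'', namely that a strongly permutable primary cyclic subgroup of a soluble group is $\mathbb{P}$-subnormal. The paper neither states nor proves such a lemma, and the proof of Proposition~\ref{l4} leans heavily on $H$ being a Hall subgroup (for instance the step $HL=H$ when $q\in\pi(H)$, and the terminal case where $H=M$ is a maximal Hall subgroup), so it does not adapt routinely to cyclic $p$-subgroups; as stated, this is an unjustified claim carrying the whole weight of the soluble case. What the paper actually does: once $G$ is soluble and every proper subgroup lies in $\mathrm{v}\mathfrak{U}$ (this uses Lemma~\ref{KPProp}\,(2), which itself requires solubility --- your blanket assertion that $\mathbb{P}$-subnormality ``restricts to subgroups'' is false in general and must be deferred until solubility is established), $G$ is a minimal non-$\mathrm{v}\mathfrak{U}$-group; by \cite[Theorem~B\,(4)]{mkRic} such a group is biprimary, minimal non-supersoluble, with cyclic non-normal Sylow subgroups, so the hypothesis applies to that cyclic Sylow subgroup and Corollary~\ref{BiU} gives supersolubility, a contradiction. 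You would need either to prove your auxiliary lemma or to take this structural route.
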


\section{Preliminaries}

Let $G$ be a group. We use $\pi (G)$ to denote the set of all prime
devisors of $|G|$.
If $r$ is a maximal element of $\pi(G)$, then we write
$r=\mathrm{max}\ \pi(G)$.
By $H\leq G$ ($H< G$, $H\lessdot \,G$, $H\lhd G$) we denote a
(proper, maximal, normal)
subgroup $H$ of $G$.

We use the GAP system~\cite{gap} to build examples.
Note that GAP package Permut~\cite{gap_perm} is especially useful
for testing subgroups permutability.

\begin{lemma}\label{KPProp}
Let $H$ and $L$ be subgroups of a group $G$
and let $N$ be a normal subgroup of~$G$.

$(1)$ If $H$ is $\mathbb{P}$-subnormal in~$G$,
 then $H\cap N$ is $\mathbb{P}$-subnormal in~$N$
 and $HN/N$ is $\mathbb{P}$-subnormal in~$G/N$~\textup{\cite[Lemma~3]{mkRic}}.

$(2)$ If $H$ is $\mathbb{P}$-subnormal in a soluble group $G$
 and $U\le G$, then $H\cap U$ is $\mathbb{P}$-subnormal
 in~$U$~\textup{\cite[Lemma~4\,(1)]{mkRic}}.

$(3)$ If $H\leq L$, $H$ is $\mathbb{P}$-subnormal in $L$
 and $L$ is $\mathbb{P}$-subnormal in $G$,
 then $H$ is $\mathbb{P}$-subnormal in $G$~\textup{\cite[Lemma~3]{mkRic}}.
\end{lemma}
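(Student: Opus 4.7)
The plan is to prove all three parts uniformly by manipulating the chain that realizes each hypothesis. The common elementary ingredient is the divisibility fact: if $A\le B$ are subgroups of a group $K$ and $X\le K$, then $|X\cap B:X\cap A|$ divides $|B:A|$. This follows from the orbit--stabilizer theorem applied to the transitive action of $B$ on the coset space $B/A$: the orbit of the trivial coset under $X\cap B$ has stabilizer $X\cap A$. I would record this as a one-line preliminary observation, because it removes any need for the intermediate subgroups to be normal and is the only substantive ingredient.

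For part (1), fix a chain $H=H_0\le H_1\le\ldots\le H_n=G$ with every index in $\mathbb{P}\cup\{1\}$. Intersecting term-by-term with $N$ yields $H\cap N=H_0\cap N\le H_1\cap N\le\ldots\le H_n\cap N=N$, and by the divisibility fact each step index lies in $\mathbb{P}\cup\{1\}$. For the quotient statement, pass to $HN/N=H_0N/N\le H_1N/N\le\ldots\le H_nN/N=G/N$; using $|H_iN|=|H_i|\,|N|/|H_i\cap N|$, a direct computation gives $|H_iN:H_{i-1}N|=|H_i:H_{i-1}|\,/\,|H_i\cap N:H_{i-1}\cap N|$, again a divisor of $|H_i:H_{i-1}|$.

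For part (2), the same intersection construction $H\cap U=H_0\cap U\le\ldots\le H_n\cap U=U$ works verbatim, since normality of the intersecting subgroup played no role in the divisibility fact. The solubility of $G$ is not used in the chain manipulation itself; it is the natural hypothesis that permits further refinement of chains into ones with strictly prime (rather than possibly trivial) indices, matching the applications of~\cite{mkRic}. For part (3), simply concatenate a chain from $H$ to $L$ witnessing $\mathbb{P}$-subnormality of $H$ in $L$ with a chain from $L$ to $G$ witnessing $\mathbb{P}$-subnormality of $L$ in $G$; every step of the combined chain still has index in $\mathbb{P}\cup\{1\}$, so $H$ is $\mathbb{P}$-subnormal in $G$.

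The main obstacle, such as it is, is purely organizational: one must check that the intermediate terms are genuine subgroups and that the stated index formulas are integers. The orbit--stabilizer reformulation resolves the only genuine subtlety, namely that $|H_i\cap X:H_{i-1}\cap X|$ divides $|H_i:H_{i-1}|$ with no normality assumption on $H_{i-1}$ inside $H_i$. Once that is in hand, each of the three parts reduces to a short direct verification of the chain of indices, and the reference to~\cite{mkRic} essentially records these observations.
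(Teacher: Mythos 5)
The paper offers no proof of this lemma (it only cites \cite{mkRic}), so your argument must stand on its own, and its central ingredient is false. The claimed ``divisibility fact'' --- that $|X\cap B:X\cap A|$ divides $|B:A|$ for arbitrary $A\le B$ and $X$ --- does not follow from orbit--stabilizer: the orbit of the trivial coset under $X\cap B$ has size $|X\cap B:X\cap A|$, which divides the order of the acting group $|X\cap B|$, not the number of cosets $|B:A|$. Concretely, for $B=S_3$, $A=\langle(1\,2)\rangle$, $X=\langle(1\,3)\rangle$ one gets $|X\cap B:X\cap A|=2$ while $|B:A|=3$. Part~(1) nevertheless survives, but precisely because $N$ is normal: $H_i\cap N\trianglelefteq H_i$, so $H_{i-1}(H_i\cap N)$ is a subgroup between $H_{i-1}$ and $H_i$ and $|H_i\cap N:H_{i-1}\cap N|=|H_{i-1}(H_i\cap N):H_{i-1}|$ divides $|H_i:H_{i-1}|$ by the index tower; likewise your formula $|H_iN|=|H_i|\,|N|/|H_i\cap N|$ needs $H_iN$ to be a subgroup. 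This is the opposite of your remark that the argument ``removes any need'' for normality. Part~(3), concatenation of chains, is correct.

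The genuine gap is in part~(2). Intersecting the chain with an arbitrary $U$ can produce composite indices: in the Frobenius group $G=C_5\rtimes C_4$ of order $20$, take $H$ a point stabilizer (so $H$ is $\mathbb{P}$-subnormal via $|G:H|=5$) and $U$ a different conjugate of $H$; then $H\cap U=1$ and the intersected chain $1\le U$ has the single index $4\notin\mathbb{P}\cup\{1\}$. The conclusion still holds there (refine to $1\le C_2\le U$), but your method does not supply the refinement. Moreover, your aside that solubility ``is not used in the chain manipulation itself'' conceals that statement~(2) is actually false without it: in $A_5$ the subgroup $H=A_4$ is $\mathbb{P}$-subnormal (index $5$), yet for a second point stabilizer $U\cong A_4$ the intersection $H\cap U\cong C_3$ is maximal of index $4$ in $U$, hence not $\mathbb{P}$-subnormal in $U$. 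Any correct proof of~(2) must use solubility essentially --- for instance, by induction through a maximal subgroup $M\ge H$ of prime index $p$, exploiting that $G/M_G$ is then supersoluble of degree $p$ so that $U\cap M$ is $\mathbb{P}$-subnormal in $U$ --- rather than by term-by-term intersection.
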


\begin{lemma}[{\cite[Lemma~2\,(2)]{mkRic}}]\label{UPsn}
Every subgroup of a supersoluble group is $\mathbb{P}$-subnormal.
\end{lemma}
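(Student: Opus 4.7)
The plan is to prove this by induction on $|G|$, relying on two standard structural facts about supersoluble groups: every minimal normal subgroup is cyclic of prime order (since chief factors of a supersoluble group have prime order), and every quotient of a supersoluble group is supersoluble. Fix $H\le G$; if $H=G$, the one-term chain already witnesses $\mathbb{P}$-subnormality. Otherwise, I would pick a minimal normal subgroup $N$ of $G$, necessarily of order $p\in\mathbb{P}$, and split the argument into two cases according to whether $N\le H$.

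If $N\le H$, I would apply the induction hypothesis to $H/N\le G/N$, which is a subgroup of a supersoluble group of strictly smaller order, obtaining a chain
\[
H/N=K_0/N\le K_1/N\le\dots\le K_m/N=G/N
\]
with each index in $\mathbb{P}\cup\{1\}$. Pulling this back via the correspondence theorem yields a $\mathbb{P}$-subnormal chain from $H$ to $G$ with the same indices. If instead $N\not\le H$, then $H\cap N=1$ because $N$ has prime order, so $HN$ is a subgroup of $G$ with $|HN:H|=|N|=p$. Applying the induction hypothesis to $HN/N\le G/N$ produces a $\mathbb{P}$-subnormal chain from $HN/N$ to $G/N$, which lifts to one from $HN$ to $G$; prepending the single step $H\le HN$ of index $p$ completes a $\mathbb{P}$-subnormal chain from $H$ to $G$.

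There is no serious obstacle here: the whole argument is a clean induction on $|G|$ powered by the existence of a normal subgroup of prime order in any non-trivial supersoluble group. The only points to verify carefully are that $HN$ is a subgroup (immediate from $N\lhd G$) and that the concatenation of the two chains in Case~2 preserves the index condition, both of which are routine.
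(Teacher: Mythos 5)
Your proof is correct: the induction on $|G|$ via a minimal normal subgroup $N$ of prime order (a standard consequence of supersolubility), splitting on whether $N\le H$ and using that quotients of supersoluble groups are supersoluble, is exactly the standard argument for this fact, and both cases (the pull-back of the chain when $N\le H$, and the extra step $H\le HN$ of index $p$ when $H\cap N=1$) are handled correctly. The paper itself offers no proof, simply citing the result from Lemma~2\,(2) of Monakhov and Kniahina, so there is no alternative route to compare against; your argument is a complete, self-contained justification of the lemma.
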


\begin{lemma}\label{KPMax}
 Let $H$ be a $\mathbb{P}$-subnormal subgroup of a group $G$.
 If $r=\mathrm{max}\ \pi(G)$, then $O_r(H)\leq O_r(G)$.
\end{lemma}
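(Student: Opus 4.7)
If $r\notin\pi(H)$ then $O_r(H)=1\le O_r(G)$, so I may assume $r\in\pi(H)$. Fix a $\mathbb{P}$-subnormal chain $H=H_0\le H_1\le\cdots\le H_n=G$ with each $|H_i:H_{i-1}|\in\mathbb{P}\cup\{1\}$. Since $r\in\pi(H)\subseteq\pi(H_i)\subseteq\pi(G)$ and $r=\max\pi(G)$, the equality $r=\max\pi(H_i)$ is preserved along the chain. I would then induct on $n$, reducing everything to the following one-step assertion: \emph{if $M\le G$ with $|G:M|=p\in\mathbb{P}$ and $p\le r=\max\pi(G)$, then $O_r(M)\le O_r(G)$.}

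For the one-step assertion, let $K$ be the core of $M$ in $G$, i.e.\ the largest normal subgroup of $G$ contained in $M$. The action of $G$ on the $p$ left cosets of $M$ embeds $G/K$ into $S_p$, so $|G:K|$ divides $p!$. Because $r$ is prime, it divides $r!$ exactly once, so the $r$-part of $p!$ equals $1$ if $p<r$ and equals $r$ if $p=r$. From this I would derive $|K|_r=|M|_r$ as follows: if $p<r$, then $|G:K|_r=1$, giving $|K|_r=|G|_r=|M|_r$; if $p=r$, then $|G|_r=r|M|_r$ and $|G:K|_r\in\{1,r\}$, but the value $1$ would force $|K|_r=|G|_r>|M|_r$, contradicting $K\le M$, so $|G:K|_r=r$ and again $|K|_r=|M|_r$.

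With $|K|_r=|M|_r$ and $K\lhd M$, every Sylow $r$-subgroup $P$ of $M$ meets $K$ in a Sylow $r$-subgroup of $K$, which by the order equality must be $P$ itself; hence $P\le K$, $\mathrm{Syl}_r(M)=\mathrm{Syl}_r(K)$, and $O_r(M)=O_r(K)$. Since $O_r(K)$ is characteristic in $K$ and $K\lhd G$, one obtains $O_r(K)\lhd G$ and so $O_r(K)\le O_r(G)$, settling the one-step assertion. Applying it with $M=H_{n-1}$ and combining with the inductive hypothesis $O_r(H)\le O_r(H_{n-1})$ closes the induction. The main subtlety is the case $p=r$: one cannot simply strip the prime $r$ from $|G:K|$, but the fact that $r$ divides $r!$ exactly once still forces $|G:K|_r=r$ rather than $1$, and that is what salvages the argument.
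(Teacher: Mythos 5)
Your proposal is correct and takes essentially the same route as the paper's own proof: induction along the $\mathbb{P}$-subnormal chain, reduction to a single prime-index step $M\le G$ with $|G:M|=p$, and the coset action embedding $G/M_G$ into $S_p$ to force the Sylow $r$-structure of $M$ into the core $M_G$, whence $O_r(M)\le O_r(M_G)\lhd G$. The only cosmetic difference is that the paper observes directly that $|M:M_G|$ divides $(p-1)!$ and is therefore an $r'$-number, which avoids your case split between $p<r$ and $p=r$.
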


\begin{proof}
By the hypothesis, there is a subgroup chain
\[
H=H_0< H_1< H_2<\ldots< H_n=G
\]
such that for every $i$, $|H_{i}:H_{i-1}|\in\mathbb{P}$.
Assume that  $O_r(H)\leq O_r(H_{i-1})$, we prove $O_r(H)\leq O_r(H_{i})$.
Let $H_{i-1}=A$ and $H_{i}=B$.
If $A$ is normal in $B$, then $O_r(A)$ is subnormal in $B$,
and $O_r(H)\leq O_r(A)\leq O_r(B)$.
If $A$ is not normal in $B$, then $|B:A|=q\in\mathbb{P}$ and $A=N_B(A)$.
Consider  the representation of $B$ by permutations on the
right cosets of~$A$~\cite[I.6.2]{hup}.
Note that $B/A_B$ is isomorphic to a subgroup
of the symmetric group $S_q$ and $|B/A_B:A/A_B|=q$.
Since $|S_q|=q!=(q-1)!q$ and $|B/A_B|$ divides $|S_q|$,
we get $|A/A_B|$ divides $(q-1)!$. As $q\in\pi(B)\subseteq\pi(G)$
and $r=\mathrm{max}\ \pi(G)$, we have $q\leq r$ and $A/A_B$ is an $r^\prime $-group.
Therefore $O_r(A)\leq A_B$. Since $O_r(A)$ is normal in $A_B$,
we get $O_r(A)$ is subnormal in $B$ and $O_r(H)\leq O_r(A)\leq O_r(B)$.
Hence $O_r(H)\leq O_r(G)$ by induction.
\end{proof}

The following lemma contains permutable and strongly permutable subgroups
properties we need.

\begin{lemma}\label{PerProp}
Let $H$ be a subgroup of a group $G$
and let $N$ be a normal subgroup of~$G$.

 $(1)$~If $H$ is (strongly) permutable in $G$,
 then $HN/N$ is (strongly) permutable in $G/N$
 \textup{\cite[lemma 3.2.\,(1),(4)]{VVV2014}}.

 $(2)$~If $N \leq H$, then $H$ is (strongly) permutable in $G$
 if and only if $H/N$ is (strongly) permutable in $G/N$.

 $(3)$~If $H$ is strongly permutable in $G$ and $H\le U$,
 then $H$ is strongly permutable in $U$.
\end{lemma}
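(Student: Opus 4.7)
The plan is to dispatch the three parts separately. Part~(1) is already attributed to \cite[Lemma~3.2]{VVV2014}, so nothing needs to be proved for it. The substantive work lies in parts~(2) and~(3), and neither requires ideas beyond the correspondence theorem and the definition of the permutizer.

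For part~(2), the forward implication in both the permutable and strongly permutable cases is exactly part~(1). For the reverse, I would first handle the plain permutable case. Assuming $H/N$ is permutable in $G/N$, the group $G/N$ is generated by cyclic subgroups of the form $\langle xN\rangle$ that permute with $H/N$. The pivotal observation, using $N\le H$, is that
\[
\langle xN\rangle\,(H/N)=\langle x\rangle NH/N=\langle x\rangle H/N,
\]
so permutability of $\langle xN\rangle$ with $H/N$ is equivalent to $\langle x\rangle H=H\langle x\rangle$, i.e.\ to $x\in P_G(H)$. Since the preimages of such generators together with $N\le H\le P_G(H)$ generate $G$, we obtain $P_G(H)=G$. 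For the strongly permutable case I would apply the same argument to each intermediate subgroup: for any $V$ with $H\le V\le G$, the subgroup $N$ is normal in $V$ and contained in $H$, and $V/N$ lies between $H/N$ and $G/N$; by hypothesis $H/N$ is permutable in $V/N$, and the permutable case applied inside $V$ then yields $P_V(H)=V$.

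Part~(3) is immediate from Definition~\ref{dper}\,(2): if $H$ is strongly permutable in $G$ and $H\le U\le G$, then any $V$ with $H\le V\le U$ also satisfies $H\le V\le G$, so the strong permutability of $H$ in $G$ gives $P_V(H)=V$, which is exactly strong permutability of $H$ in $U$.

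The only step requiring any attention is the coset manipulation underlying the permutable case of~(2); everything else is a formal consequence of the definitions and of the lattice correspondence between subgroups of $G$ above $N$ and subgroups of $G/N$. I do not anticipate any genuine obstacle in writing up the argument.
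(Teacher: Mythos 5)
Your proposal is correct and follows essentially the same route as the paper: part (1) by citation, part (2)'s forward direction from part (1), the converse by passing to each intermediate subgroup $V$ with $H\le V\le G$ and using the correspondence $P_{V/N}(H/N)=P_V(H)/N$, and part (3) directly from the definition. The only difference is that you prove the key coset identity (and the permutable case of (2)) directly, whereas the paper cites both from \cite{VVV2014}; your computation $\langle xN\rangle(H/N)=\langle x\rangle H/N$ under $N\le H$ is exactly the content of the cited Lemma 3.6 there, so nothing is gained or lost.
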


\begin{proof}
$(2)$ This statement is known for permutable subgroups~\cite[lemma 3.2\,(3)]{VVV2014}.
We prove it for strongly permutable subgroups.
If $H$ is a strongly permutable subgroup of $G$,
then in view of Statement~$(1)$, $H/N$ is strongly permutable in $G/N$.
Conversely, let $H/N$ be strongly permutable in $G/N$
and let $A$ be a subgroup of $G$ containing $H$.
Then $P_{A/N}(H/N)=A/N$. In view of~\cite[Lemma~3.6]{VVV2014},
$P_{A/N}(H/N)=P_A(H)/N$ and $P_A(H)=A$,
hence $H$ is strongly permutable in $G$.

$(3)$~This is evident in view of Definition~\ref{dper}\,(2).
\end{proof}

\begin{lemma}\label{PG=NG}
 Let $r=\mathrm{max}\ \pi(G)$
 and let $R$ be a Sylow $r$-subgroup of a group $G$.
 Then $N_G(R)=P_G(R)$. In particular,
 if $R$ is permutable in $G$, then $R$ is normal in $G$.
\end{lemma}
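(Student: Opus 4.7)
The inclusion $N_G(R)\subseteq P_G(R)$ holds for every subgroup and is the standard fact cited from~\cite[p.~26]{Wein1982}, so the task is to prove the reverse inclusion. Fix $x\in G$ with $\langle x\rangle R=R\langle x\rangle$ and set $H=\langle x\rangle R$. Since $R$ is Sylow in $G$, it is also a Sylow $r$-subgroup of $H$, so $|H:R|$ is coprime to $r$. This forces $\langle x\rangle\cap R$ to coincide with the unique Sylow $r$-subgroup $\langle z\rangle$ of the cyclic group $\langle x\rangle$, hence the $r$-part $z$ of $x$ lies in $R$. Writing $y$ for the $r'$-part of $x$, a short order count gives $R\langle y\rangle=H=\langle y\rangle R$ with $\langle y\rangle\cap R=1$, so the problem reduces to proving $R\triangleleft H$ whenever $H=R\langle y\rangle$ with $\langle y\rangle$ cyclic of order coprime to $r$ and $r=\max\pi(H)$.

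The plan for this reduction is induction on $|H|$. The group $H$ is the product of two nilpotent subgroups, hence soluble by the Kegel--Wielandt theorem, so it admits a minimal normal subgroup $N$, an elementary abelian $p$-group. If $p=r$ then $N\leq R$ and $H/N=(R/N)(\langle y\rangle N/N)$ inherits the hypotheses, so the inductive conclusion $R/N\triangleleft H/N$ yields $R\triangleleft H$. If $p\neq r$, so $p<r$, then $(RN/N)\cap(\langle y\rangle N/N)$ is simultaneously an $r$-group and an $r'$-group and hence trivial; comparing this with $|H/N|=|(RN/N)(\langle y\rangle N/N)|$ forces $N\leq\langle y\rangle$, so $N$ is cyclic of order $p$. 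Since $r>p-1=|\mathrm{Aut}(N)|$, the conjugation action of $R$ on $N$ is trivial, giving $RN=R\times N$ and making $R$ characteristic in $RN$. Applying the induction to $H/N$ shows $RN\triangleleft H$, and therefore $R\triangleleft H$. Once $R\triangleleft H$, both $z\in R$ and $y\in H\leq N_G(R)$ normalise $R$, so $x\in N_G(R)$; the final assertion of the lemma follows on taking $U=G$, as $P_G(R)=G$ then forces $N_G(R)=G$.

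I expect the main obstacle to lie in the case $p\neq r$: the maximality of $r$ enters precisely through the numerical fact $r\nmid(p-1)$, which is what makes the action of $R$ on the cyclic minimal normal subgroup $N$ trivial and splits $RN$ as a direct product. A secondary delicate point will be verifying that the trivial intersection $\langle y\rangle\cap R=1$ and the maximality $r=\max\pi(\cdot)$ persist under passage to $H/N$ so that the induction hypothesis actually applies in each branch.
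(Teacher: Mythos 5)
Your argument is correct, and the opening reduction (splitting $x$ into its $r$-part $z$, which is forced into $R$ because $|H:R|=|\langle x\rangle:\langle x\rangle\cap R|$ is prime to $r$, and its $r'$-part $y$, so that $H=R\langle y\rangle$) is exactly the paper's first step, which cites Huppert VI.4.7 for $R\langle x_1\rangle=R$. From there the two proofs diverge. The paper finishes in one stroke: in $R\langle x_2\rangle$ every Sylow subgroup for a prime other than $r$ is cyclic (being a Sylow subgroup of the cyclic $r'$-group $\langle x_2\rangle$, by the coprime order count), and since $r=\max\pi(R\langle x_2\rangle)$, Huppert IV.2.7 — essentially iterated Burnside normal $p$-complement, giving a Sylow tower with the largest prime on the bottom — yields $R\lhd R\langle x_2\rangle$ immediately. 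You instead run an induction on $|H|$ through a minimal normal subgroup $N$, invoking Kegel--Wielandt for solubility, locating $N$ inside $\langle y\rangle$ via the order count when $p\neq r$, and using $r\nmid p-1=|\mathrm{Aut}(N)|$ to split off $R$ as a characteristic factor of $RN$. Both routes are sound; yours has the merit of making explicit exactly where the maximality of $r$ enters (the arithmetic fact $r\nmid p-1$), while the paper's is shorter because it delegates precisely that arithmetic to a standard structure theorem. One small economy you could make: Kegel--Wielandt is heavier than needed here, since the solubility of $R\langle y\rangle$ already follows from Burnside's normal complement theorem applied to the smallest prime divisor, whose Sylow subgroup is cyclic — which is essentially the same classical input the paper uses. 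The degenerate branches (e.g.\ $N=R$, or $r\notin\pi(H/N)$) that you flag as needing verification are indeed all trivial, so nothing is missing.
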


\begin{proof}
Let $x\in G$ and $R\langle x\rangle=\langle x\rangle R$.
It is clear $\langle x\rangle=\langle x_1\rangle\times\langle x_2\rangle$,
where
$\langle x_1\rangle$ is a Sylow $r$-subgroup of $\langle x\rangle$ and
$\langle x_2\rangle$ is a Hall $r^\prime$-subgroup of $\langle x\rangle$.
In view of~\cite[VI.4.7]{hup}, $R=R\langle x_1\rangle$,
therefore $R\langle x\rangle=R\langle x_2\rangle $.
Now, all Sylow $r^\prime $-subgroups of $R\langle x\rangle$ is cyclic.
As $r=\mathrm{max}\ \pi(R\langle x\rangle)$,
it implies that $R$ is normal in~$R\langle x\rangle$ by \cite[IV.2.7]{hup},
and $\langle x\rangle\leq N_G(R)$.
Since $P_G(R)$ is generated by elements $x$ such that
$R\langle x\rangle=\langle x\rangle R$, we conclude $P_G(R)=N_G(R)$.
\end{proof}

\begin{lemma}\label{SylTower}
 If every Sylow subgroup of a group $G$ is
 $\mathbb{P}$-subnormal or permutable,
 then $G$ has a Sylow tower of supersoluble type.
\end{lemma}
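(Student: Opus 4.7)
The plan is to proceed by induction on $|G|$, with the base case (trivial or $p$-group) being immediate. The key move is to show that a Sylow $r$-subgroup $R$ for $r=\max\pi(G)$ is always normal in $G$ under the hypothesis; then the hypothesis passes to $G/R$ and induction finishes the argument.

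For the normality of $R$, I would split into the two cases the hypothesis provides. If $R$ is permutable in $G$, then by Lemma~\ref{PG=NG} we have $P_G(R)=G$ implying $R\lhd G$ directly. If instead $R$ is $\mathbb{P}$-subnormal, I would apply Lemma~\ref{KPMax} with $H=R$: since $R$ is an $r$-group we have $O_r(R)=R$, so $R\le O_r(G)$, which forces $R=O_r(G)\lhd G$. Thus in either case $R$ is normal in $G$.

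Next I would verify that the inductive hypothesis applies to $\overline{G}=G/R$. Every Sylow $p$-subgroup of $\overline{G}$ with $p\ne r$ has the form $PR/R$ where $P$ is a Sylow $p$-subgroup of $G$ (and for $p=r$ it is trivial). If $P$ is $\mathbb{P}$-subnormal in $G$, then $PR/R$ is $\mathbb{P}$-subnormal in $\overline{G}$ by Lemma~\ref{KPProp}(1); if $P$ is permutable in $G$, then $PR/R$ is permutable in $\overline{G}$ by Lemma~\ref{PerProp}(1). Hence every Sylow subgroup of $\overline{G}$ is $\mathbb{P}$-subnormal or permutable.

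By the inductive hypothesis $\overline{G}=G/R$ has a Sylow tower of supersoluble type, and since $R$ is the Sylow subgroup for the largest prime and is normal in $G$, appending $R$ on top yields a Sylow tower of supersoluble type for $G$. There is no real obstacle here: once the two lemmas giving normality of the top Sylow subgroup are in hand, the quotient argument is routine, and the only point deserving care is noting that $\mathbb{P}$-subnormality suffices to extract normality via $O_r(R)\le O_r(G)$ precisely because $r$ is the largest prime divisor.
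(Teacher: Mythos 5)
Your proposal is correct and follows essentially the same route as the paper's proof: normality of the Sylow subgroup for the largest prime via Lemma~\ref{KPMax} (in the $\mathbb{P}$-subnormal case) or Lemma~\ref{PG=NG} (in the permutable case), then passing the hypothesis to $G/R$ via Lemmas~\ref{KPProp}(1) and~\ref{PerProp}(1) and inducting. Your explicit remark that $O_r(R)=R\le O_r(G)$ forces $R=O_r(G)\lhd G$ is a welcome elaboration of a step the paper leaves implicit.
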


\begin{proof}
Let $R$ be a Sylow $r$-subgroup of a group $G$ for $r=\mathrm{max}\ \pi(G)$.
If $R$ is $\mathbb{P}$-subnormal in $G$,
then $R$ is normal in $G$ by Lemma~\ref{KPMax}.
If $R$ is permutable in $G$, then $R$ is normal in $G$ by Lemma~\ref{PG=NG}.
Hence $R$ is normal in~$G$.

Let $\overline{Q}$ be a Sylow $q$-subgroup of $\overline{G}=G/R$.
Then $\overline{Q}=QR/R$ for a Sylow $q$-subgroup $Q$ of~$G$.
If $Q$ is $\mathbb{P}$-subnormal in $G$,
then $\overline{Q}$ is $\mathbb{P}$-subnormal in $\overline{G}$
in view of Lemma~\ref{KPProp}\,(1).
If $Q$ is permutable in $G$, then $\overline{Q}$ is permutable in $\overline{G}$
by Lemma~\ref{PerProp}\,(1).
Thus, the hypothesis holds for  $\overline{G}$,
and by induction, $\overline{G}$ has a Sylow tower of supersoluble type.
Therefore $G$ also has a Sylow tower of supersoluble type.
\end{proof}

\begin{lemma}[{\cite[Lemma~2.1]{QQW2013}}]\label{MC}
 Let $M$ be a maximal subgroup of a soluble group $G$,
 and assume that $G=MC$ for a cyclic subgroup $C$.
 Then $|G : M|$ is a prime or $4$.
 Also, if $|G : M| = 4$, then $G/M_G = S_4$.
\end{lemma}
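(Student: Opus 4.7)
The plan is to induct on $|G|$ and reduce first to the case where $M$ has trivial core. If $M_G \ne 1$, I would pass to $\overline G = G/M_G$ with $\overline M = M/M_G$ and $\overline C = CM_G/M_G$, noting that $\overline G = \overline M\,\overline C$, $\overline C$ is cyclic, $\overline M$ is a core-free maximal subgroup of $\overline G$, and $|G:M| = |\overline G:\overline M|$; applying the inductive hypothesis to $\overline G$ transfers the conclusion, and in the case $|G:M| = 4$ it gives $\overline G = (\overline G)/(\overline M)_{\overline G} = S_4$, which is exactly $G/M_G = S_4$. So assume $M_G = 1$. Then since $G$ is soluble and $M$ is core-free maximal, the socle $N$ is the unique minimal normal subgroup of $G$, is elementary abelian of order $p^k$, and $G = N \rtimes M$ with $M$ acting faithfully and irreducibly on $N$. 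Consequently $G$ embeds as a primitive soluble subgroup of $AGL(k, p)$ and $|G:M| = p^k$.

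Next I would extract a cyclic regular subgroup. The identity $G = MC$ gives $|C : M \cap C| = p^k$, and the faithfulness of the coset action $G \to \mathrm{Sym}(G/M)$ (whose kernel is $M_G = 1$) makes $C$ isomorphic to a cyclic transitive subgroup of $S_{p^k}$, hence generated by a $p^k$-cycle. Thus $|C| = p^k$ and $C$ acts regularly on $G/M$. It remains to show $p^k \in \{p, 4\}$ and to pin down $G$ when $p^k = 4$. An element of order $p^k$ in $AGL(k, p)$ must have the form $(v, A)$ with $A$ unipotent of order $p^{k-1}$, and by a standard Jordan-form argument one may assume $A = I + N$ with $N^k = 0 \ne N^{k-1}$. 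Computing
\[
(v, A)^{p^{k-1}} = (\sigma v,\, I), \qquad
\sigma \;=\; \sum_{j=0}^{k-1} \binom{p^{k-1}}{j+1} N^j
\]
(via the hockey-stick identity) and reducing modulo $p$ by Lucas's theorem, the only potentially surviving summand corresponds to $j = p^{k-1}-1$, so $\sigma \equiv N^{p^{k-1}-1} \pmod p$; for this term to actually appear one needs $p^{k-1} - 1 \le k - 1$, i.e.\ $p^{k-1} \le k$, which forces $k = 1$ or $(k, p) = (2, 2)$.

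Concluding, if $k = 1$ then $|G:M| = p$ is prime, and if $(k, p) = (2, 2)$ then $|G:M| = 4$; in the latter case $G$ is a primitive soluble subgroup of $\mathrm{Sym}(G/M) \cong S_4$ containing a cyclic subgroup of order $4$, which forces $G = S_4$ (as $A_4$ has no element of order $4$). The main obstacle I anticipate is the modular bookkeeping establishing $\sigma \equiv N^{p^{k-1}-1} \pmod p$; an alternative that sidesteps this computation would be to appeal to the classical Schur--Burnside theorem on primitive permutation groups containing a transitive cyclic subgroup, which immediately restricts the degree either to a prime or to a prime power admitting a $2$-transitive soluble action, after which the solubility and small-degree classification again isolate $S_4$.
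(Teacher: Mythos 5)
The paper offers no proof of Lemma~\ref{MC}: it is imported verbatim from \cite{QQW2013}, so there is no internal argument to compare against, and your proposal must stand on its own. It does. The reduction to $M_G=1$, the identification of the primitive soluble group $G$ with a subgroup of $AGL(k,p)$ of degree $|G:M|=|N|=p^k$, and the deduction that $C$ embeds as a regular cyclic subgroup of order $p^k$ (a transitive abelian permutation group is regular, and the coset action is faithful because $M_G=1$) are all correct. The arithmetic core also checks out: for a generator $(v,A)$ with $A=I+N$ unipotent one has $A^{p^{k-1}}=I$ automatically (since $p^{k-1}\ge k$), so $(v,A)^{p^{k-1}}=(\sigma v, I)$ with $\sigma=\sum_{j}\binom{p^{k-1}}{j+1}N^{j}\equiv N^{p^{k-1}-1}\pmod p$, and requiring $\sigma v\ne 0$ forces $p^{k-1}\le k$, i.e.\ $k=1$ or $(k,p)=(2,2)$; the endgame in $S_4$ is fine. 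Two cosmetic inaccuracies: you cannot ``assume'' $N^{k-1}\ne 0$ (the unipotent part need not be a single Jordan block), and $A$ need not have order exactly $p^{k-1}$ --- but neither is used: if $e\le k$ is the nilpotency index of $N$, then $N^{p^{k-1}-1}\ne 0$ gives $p^{k-1}\le e\le k$, the same conclusion. Finally, the binomial bookkeeping you flag as the main obstacle can be avoided entirely by a softer count: the Sylow $p$-subgroup $C_p$ of $C$ still satisfies $G=MC_p$, it meets $N$ in a subgroup of order at most $p$ (as $N$ has exponent $p$ and $C_p$ is cyclic), and $C_pN/N$ is a cyclic $p$-subgroup of $G/N\cong M\le GL(k,p)$, hence of order at most $p^{\lceil\log_p k\rceil}$; this yields $p^k\le p^{\,1+\lceil\log_p k\rceil}$ and the same dichotomy $k=1$ or $(k,p)=(2,2)$, with no Lucas-type computation. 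The Schur--Burnside alternative you mention would also work but is far heavier machinery than the problem requires.
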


We will also repeatedly use the following statement.

\begin{lemma}[{\cite[Lemma~2.2]{mkRic}}]\label{PrimitiveGr}
 Let $\mathfrak{F}$ be a saturated formation and let $G$ be a group.
 Suppose that $G\notin\mathfrak{F}$ but $G/N \in \mathfrak{F}$
 for any normal subgroup $N$ of $G$, $N\neq 1$. Then $G$ is a primitive group.
\end{lemma}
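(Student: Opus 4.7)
The plan is to exploit the fact that $G$ is \emph{$\mathfrak{F}$-critical}---not in $\mathfrak{F}$, yet every proper quotient is---to force the existence of a maximal subgroup with trivial normal core, which is precisely what it means for $G$ to be primitive.

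First I would establish that $G$ has a unique minimal normal subgroup. If $N_1$ and $N_2$ were two distinct minimal normal subgroups of $G$, then $N_1\cap N_2=1$ by minimality, and the diagonal embedding realises $G$ as a subdirect product of $G/N_1$ and $G/N_2$. Since a formation is closed under taking quotients and under subdirect products, and both factors lie in $\mathfrak{F}$ by hypothesis, we would conclude $G\in\mathfrak{F}$, contradicting the assumption. Hence there is a unique minimal normal subgroup, say $N$.

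Next I would show $N\not\le\Phi(G)$. By the very definition of a saturated formation, $G/K\in\mathfrak{F}$ with $K\le\Phi(G)$ forces $G\in\mathfrak{F}$. If $N\le\Phi(G)$, applying this to $K=N$ (using $G/N\in\mathfrak{F}$) would again place $G$ in $\mathfrak{F}$, a contradiction. Consequently, some maximal subgroup $M$ of $G$ does not contain $N$. The normal core $M_G$ then satisfies $N\not\le M_G$; since $M_G\cap N$ is normal in $G$ and strictly smaller than $N$, minimality gives $M_G\cap N=1$. If $M_G$ were nontrivial, it would contain some minimal normal subgroup of $G$, which by the uniqueness established above must coincide with $N$, contradicting $M_G\cap N=1$. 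Therefore $M_G=1$, and $M$ is a core-free maximal subgroup, so $G$ is primitive.

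The argument is structurally routine within formation theory and I do not expect any substantive obstacle: the only care needed is to invoke exactly the correct definitional properties---closure under subdirect products in the first step, and the Frattini-type criterion for saturation in the second---and to keep straight that the uniqueness of $N$ is what allows one to collapse any hypothetical nontrivial core to the trivial intersection with $N$.
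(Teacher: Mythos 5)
Your argument is correct and is the standard one: uniqueness of the minimal normal subgroup via closure of formations under subdirect products, non-containment in $\Phi(G)$ via saturation, and then collapsing the core of a maximal complement. The paper itself only cites this lemma from \cite{mkRic} without reproducing a proof, and your argument is exactly the expected one, so there is nothing further to compare.
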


\section{Groups with permutable and $\mathbb{P}$-subnormal Sylow subgroups}

\setcounter{theorem}{0}

\begin{proposition}\label{l4}
 Let $G$ be a soluble group and let $H$ be a Hall subgroup.
 Then $H$ is $\mathbb{P}$-subnormal in $G$ if and only if
 $H$ is strongly permutable in $G$.
\end{proposition}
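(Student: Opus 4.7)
The plan is to prove the nontrivial direction—that a strongly permutable Hall subgroup $H$ of a soluble group $G$ is $\mathbb{P}$-subnormal—since the other implication is cited from \cite[3.8]{VVV2014}. I would induct on $|G|$, with $H=G$ as the trivial base case.

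Assume $H<G$ and pick a maximal subgroup $M$ containing $H$. Since $H$ is strongly permutable in $G$, $P_G(H)=G$, so some cyclic $\langle x\rangle$ permutes with $H$ and lies outside $M$; maximality of $M$ then gives $G=M\langle x\rangle$. Lemma~\ref{MC} now applies to the soluble group $G$ and the maximal subgroup $M$ with cyclic complement: either $|G:M|$ is a prime, or $|G:M|=4$ and $G/M_G\cong S_4$.

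The prime-index case is immediate: by Lemma~\ref{PerProp}(3), $H$ is strongly permutable in $M$ and remains Hall there, so induction gives a $\mathbb{P}$-subnormal chain from $H$ to $M$, which extends by the prime step $M\le G$.

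The main obstacle is the exceptional case $|G:M|=4$, and this is where Lemma~\ref{MC} has to be exploited carefully. The parity argument I would apply is that $H$ being Hall together with $4\mid|G:H|$ forces $|H|$ to be odd, so $HM_G/M_G$ is an odd-order subgroup of $G/M_G\cong S_4$, and every such subgroup (trivial or Sylow $3$-subgroup, since all $3$-cycles are even) lies in $A_4$. Taking $K\le G$ with $K/M_G=A_4$, one obtains $H\le HM_G\le K$ and $|G:K|=2$; strong permutability and the Hall property descend to $K$ by Lemma~\ref{PerProp}(3), the inductive hypothesis produces a $\mathbb{P}$-subnormal chain from $H$ to $K$, and appending the prime step $K\le G$ completes the chain.
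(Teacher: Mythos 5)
The proposal breaks down at the step ``maximality of $M$ then gives $G=M\langle x\rangle$''. From $P_G(H)=G$ you correctly get a cyclic subgroup $\langle x\rangle$ with $\langle x\rangle H=H\langle x\rangle$ and $x\notin M$, but the only subgroup this produces is $A=H\langle x\rangle$, which is not contained in $M$; maximality of $M$ yields $\langle M,x\rangle=G$, \emph{not} the set-theoretic factorization $G=M\langle x\rangle$ that Lemma~\ref{MC} requires. (The product $M\langle x\rangle$ need not even be a subgroup: in $S_4$ with $M=\langle(12),(123)\rangle$ and $x=(14)$ one has $x\notin M$ and $|M\langle x\rangle|=12$.) The factorization would follow only if $H\langle x\rangle=G$, but nothing rules out that $A=H\langle x\rangle$ is a proper subgroup of $G$ incomparable with $M$, and your argument has no branch for that case. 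Since the prime-or-$4$ dichotomy is exactly what the rest of your proof rests on, this is a genuine gap, not a cosmetic one; the $S_4$-parity analysis of the index-$4$ case, while correct in itself, is therefore moot.

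The paper avoids this by a different reduction: it first disposes of the cases $|G:M|\in\mathbb{P}$ and $M_G\neq 1$ by induction (the latter via $HM_G/M_G$ in $G/M_G$ and Lemma~\ref{KPProp}), so that one may assume $G=N\rtimes M$ is primitive with $N=O_p(G)$ minimal normal; then either $HN<G$ and induction applies again, or $H=M$ is itself maximal. Only in that last situation does one invoke $P_G(H)=G$, and there $H\langle x\rangle$ properly contains the \emph{maximal} subgroup $H$, so it genuinely equals $G$; writing $\langle x\rangle=\langle x_1\rangle\times\langle x_2\rangle$ with $x_1$ the $p$-part forces $\langle x_1\rangle\le N$, $H\langle x_2\rangle=H$ and hence $|G:H|=p$, with no appeal to Lemma~\ref{MC} at all. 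To repair your proof you would need either to carry out a comparable reduction to the case where the permuting cyclic subgroup actually generates $G$ together with $H$, or to handle the intermediate subgroups $H\le H\langle x\rangle<G$ explicitly.
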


\begin{proof}
Let $H$ be $\mathbb{P}$-subnormal in $G$.
According to~\cite[3.8]{VVV2014}, $H$ is strongly permutable in $G$.
For completeness, we give the proof of this statement.
We use induction on the order of $G$.
Since $H$ is $\mathbb{P}$-subnormal in $G$,
there is a maximal subgroup $M$ of $G$ such that $H\leq M$,
$|G:M|\in\mathbb{P}$ and $H$ is $\mathbb{P}$-subnormal in $M$.
By induction, $H$ is strongly permutable in $M$ and
$M=P_M(H)\leq P_G(H)$. Since $M$ is a maximal subgroup of $G$,
we assume $P_G(H)=M$. Suppose that $M_G\neq 1$ and
$L$ is a minimal normal subgroup of $G$ that is contained in $M_G$.
According to Lemma~\ref{KPProp}\,(1), $HL/L$ is $\mathbb{P}$-subnormal in $G/L$,
and by induction, $HL/L$ is permutable in $G/L$.
Hence $HL$ is permutable in $G$ in view of Lemma~\ref{PerProp}\,(3).
Since $G$ is soluble, we conclude $L$ is an elementary abelian $q$-group
for some $q\in\pi(G)$.
If $q\in\pi(H)$, then $HL=H$ and $H$ is permutable in $G$, a contradiction.
Therefore we can assume that $q\notin \pi(H)$.
Since $HL$ is permutable in $G$, then $P_G(HL)=G$ and
there is $x\in G\setminus M$ such that
$\langle x\rangle HL=HL\langle x\rangle=A$.
Suppose that $A$ is a proper subgroup of $G$.
As $H$ is $\mathbb{P}$-subnormal in $G$,
by Lemma~\ref{KPProp}\,(2), it follows that
$H$ is $\mathbb{P}$-subnormal in $A$,
and by induction, $H$ is permutable in $A$.
Therefore $A=P_A(H)\leq P_G(H)=M$ and $x\in M$, a contradiction.
Hence $G=\langle x\rangle HL$.
If $L\leq\Phi(G)$, then $G=\langle x\rangle H$ and
$x\in P_G(H)=M$, a contradiction.
Consequently, $L$ is not contained in $\Phi (G)$ and
there is a maximal subgroup $K$ of $G$
that does not contain $L$.
In that case, $G=LK$ and we can assume $H\leq K$.
By induction, $H$ is permutable in $K$ and $K=P_K(H)\leq P_G(H)=M$.
Hence we get $M=K$ and $L\leq K$, a contradiction.
Thus, $M_G=1$ and $G$ is a primitive group.
Consequently, $G=N\rtimes M$, where $N=F(G)$ is
a unique minimal normal subgroup of $G$.
Since $|G:M|\in\mathbb{P}$, we deduce
$N$ is a cyclic subgroup and $N\leq P_G(H)=M$, a contradiction.
Thus $H$ is permutable in $G$, and in view of Lemma~\ref{KPProp}\,(2),
$H$ is strongly permutable in~$G$.

Conversely, let $H$ be a Hall strongly permutable subgroup
of a soluble group $G$. Using induction on the order of $G$
we prove that $H$ is $\mathbb{P}$-subnormal in $G$.
Let $H\leq M\lessdot G$. By Lemma~\ref{PerProp}\,(3),
$H$ is strongly permutable in $M$, and by induction,
$H$ is $\mathbb{P}$-subnormal in $M$.
If $|G:M|\in\mathbb{P}$, then $H$ is $\mathbb{P}$-subnormal in $G$
by Lemma~\ref{KPProp}\,(3).
Hence we can assume that $|G:M|\notin\mathbb{P}$,
in particular, $M$ is not normal in $G$.
According to Lemma~\ref{PerProp}\,(1), $G/M_G$ contains
a strongly permutable Hall subgroup $HM_G/M_G$.
As $HM_G\leq M$, we obtain that $H$ is $\mathbb{P}$-subnormal in $HM_G$
by induction. If $M_G\neq 1$, then $HM_G/M_G$ is $\mathbb{P}$-subnormal in $G/M_G$
by induction. By Lemma~\ref{KPProp}\,(1), $HM_G$ is $\mathbb{P}$-subnormal in $G$,
and $H$ is $\mathbb{P}$-subnormal in $G$ in view of Lemma~\ref{KPProp}\,(3).
Therefore we can assume that $M_G=1$.
Since $G$ is soluble, we get $G=N\rtimes M$,
$N=F(G)=C_G(N)=O_p(G)$ is a unique minimal normal subgroup in $G$.
Let $HN<G$. By induction, $HN/N$ is $\mathbb{P}$-subnormal in $G/N$,
and $H$ is $\mathbb{P}$-subnormal in $G$.
Finally, we consider the case, when $H=M$ is a Hall subgroup.
By the hypothesis, there is $x\in G\setminus H$
such that $\langle x\rangle H=H\langle x\rangle=G$.
Let $\langle x\rangle=\langle x_1\rangle \times\langle x_2\rangle$,
where $\langle x_1\rangle$ is a $p$-subgroup and
$\langle x_2\rangle$ is a $p^\prime $-subgroup.
Since $N$ is a normal Sylow $p$-subgroup of $G$,
we conclude $\langle x_1\rangle\leq N$ and $|x_1|=p$.
According to~\cite[VI.4.6]{hup}, $H=\langle x_2\rangle H$.
Now, $G=\langle x\rangle H=\langle x_1\rangle H$  and $|G:H|=p$.
\end{proof}

\begin{pcorollary}\cite[Theorem~A]{czl}\label{th4}
 If every Sylow subgroup of a group~$G$ is
 $\mathbb{P}$-subnormal or strongly permutable,
 then $G\in\mathrm{w}\mathfrak{U}$.
 Conversely, if $G\in\mathrm{w}\mathfrak{U}$,
 then every Sylow subgroup is $\mathbb{P}$-subnormal
 and strongly permutable in~$G$.
\end{pcorollary}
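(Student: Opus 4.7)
The plan is to reduce both implications to Proposition~\ref{l4} by first forcing $G$ to be soluble, so that its Sylow subgroups become Hall subgroups and the Hall-version equivalence between $\mathbb{P}$-subnormality and strong permutability can be applied.

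For the forward direction, I would begin with the observation that every strongly permutable subgroup is permutable (take $U=G$ in Definition~\ref{dper}(2)). Hence the hypothesis says that every Sylow subgroup of $G$ is $\mathbb{P}$-subnormal or permutable, which is exactly the assumption of Lemma~\ref{SylTower}. That lemma produces a Sylow tower of supersoluble type for $G$, and in particular $G$ is soluble. Each Sylow is then a Hall subgroup of a soluble group, so Proposition~\ref{l4} converts any strongly permutable Sylow into a $\mathbb{P}$-subnormal one. All Sylows of $G$ are therefore $\mathbb{P}$-subnormal, which is the definition of $G\in\mathrm{w}\mathfrak{U}$.

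For the converse, assume $G\in\mathrm{w}\mathfrak{U}$, so every Sylow subgroup of $G$ is $\mathbb{P}$-subnormal. I would first establish solubility by induction on $|G|$: letting $r=\max\pi(G)$ and $R$ a Sylow $r$-subgroup, Lemma~\ref{KPMax} applied with $H=R$ gives $R=O_r(R)\le O_r(G)$, so $R\trianglelefteq G$. By Lemma~\ref{KPProp}(1), the hypothesis transfers to $G/R$ (its Sylow subgroups are the images $QR/R$ of Sylow subgroups $Q$ of $G$), so by induction $G/R$ has a Sylow tower of supersoluble type and is soluble; hence so is $G$. Now every Sylow is a Hall subgroup of a soluble group and Proposition~\ref{l4} upgrades $\mathbb{P}$-subnormality to strong permutability, completing the converse.

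The main obstacle: essentially there is none, since the substantive work has been done in Proposition~\ref{l4}, Lemma~\ref{SylTower} and Lemma~\ref{KPMax}. The only conceptual step is recognising that to invoke Proposition~\ref{l4} one must first secure solubility of $G$, and that this solubility comes free from combining the Sylow-tower lemma with the trivial remark that strong permutability implies permutability.
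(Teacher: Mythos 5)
Your proposal is correct and follows essentially the same route as the paper: Lemma~\ref{SylTower} gives solubility, and Proposition~\ref{l4} converts between strong permutability and $\mathbb{P}$-subnormality for Sylow (hence Hall) subgroups in both directions. The only cosmetic difference is that in the converse you re-derive the solubility of a $\mathrm{w}\mathfrak{U}$-group by hand via Lemma~\ref{KPMax}, whereas the paper takes it as known (it also follows directly by applying Lemma~\ref{SylTower} to the all-$\mathbb{P}$-subnormal case).
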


\begin{proof}
Assume that every Sylow subgroup of $G$ is $\mathbb{P}$-subnormal or strongly permutable.
By Lemma~\ref{SylTower}, $G$ has a Sylow tower of supersoluble type,
which means that $G$ is soluble. Hence by Proposition~\ref{l4},
every strongly permutable Sylow subgroup of $G$
is $\mathbb{P}$-subnormal and $G\in\mathrm{w}\mathfrak{U}$.

Conversely, let $G\in\mathrm{w}\mathfrak{U}$.
By the definition of $\mathrm{w}\mathfrak{U}$,
every Sylow subgroup is $\mathbb{P}$-subnormal in~$G$.
Since~$G$ is soluble, according to Proposition~\ref{l4},
every Sylow subgroup is strongly permutable in~$G$.
\end{proof}

\begin{pcorollary}\label{c42}
Let $G$ be a group. The following statements are equivalent.

$(1)$ $G$ is supersoluble.

$(2)$ Every Hall subgroup of $G$ is $\mathbb{P}$-subnormal or strongly permutable.

$(3)$ Every Hall subgroup of $G$ is $\mathbb{P}$-subnormal or permutable.
\end{pcorollary}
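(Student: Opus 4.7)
The plan is to establish the cycle $(1)\Rightarrow(2)\Rightarrow(3)\Rightarrow(1)$. The first two implications are immediate: $(1)\Rightarrow(2)$ follows from Lemma~\ref{UPsn}, since every subgroup of a supersoluble group is $\mathbb{P}$-subnormal; and $(2)\Rightarrow(3)$ follows by taking $U=G$ in Definition~\ref{dper}(2), so a strongly permutable subgroup is permutable. All the substance lies in $(3)\Rightarrow(1)$, which I would handle by induction on $|G|$.

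As a preliminary observation, hypothesis (3) applied to singleton prime sets says every Sylow subgroup is $\mathbb{P}$-subnormal or permutable, so by Lemma~\ref{SylTower} $G$ is soluble with a Sylow tower of supersoluble type. I would next check that (3) passes to proper quotients $G/N$: given a Hall $\pi$-subgroup $\bar H$ of $G/N$, a brief case split on whether the prime $p$ dividing $|N|$ belongs to $\pi$ (using Schur--Zassenhaus in the negative case to extract a Hall complement of $N$ in its preimage) produces a Hall $\pi$-subgroup $H_0$ of $G$ with $H_0N/N=\bar H$. Lemmas~\ref{KPProp}(1) and~\ref{PerProp}(1) then push the relevant property down to $\bar H$.

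Let $G$ now be a minimal counterexample. By induction every proper quotient of $G$ is supersoluble, so Lemma~\ref{PrimitiveGr} applied to the saturated formation of supersoluble groups places $G$ in the standard primitive setting: $G=N\rtimes M$ with $N=F(G)=C_G(N)=O_p(G)$ the unique minimal normal subgroup, elementary abelian of exponent $p$, and $M_G=1$. The crucial observation is that the Sylow tower makes the Sylow $r$-subgroup for $r=\max\pi(G)$ normal in $G$, and since it must contain the unique minimal normal subgroup $N$ while $N$ is a $p$-group, one concludes $p=r$. Hence $N$ is itself the normal Sylow $p$-subgroup of $G$ and $M$ is a Hall $p'$-subgroup, so hypothesis (3) can be applied directly to $M$.

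If $M$ is $\mathbb{P}$-subnormal in $G$, maximality of $M$ forces $|G:M|=|N|\in\mathbb{P}$, so $|N|=p$. If instead $M$ is permutable in $G$, maximality yields some $x\in G\setminus M$ with $\langle x\rangle M=G$; writing $\langle x\rangle=\langle x_p\rangle\times\langle x_{p'}\rangle$ and comparing $p$-parts in $|N|=|\langle x\rangle|/|\langle x\rangle\cap M|$ (using that $M$ is a $p'$-group) gives $|\langle x_p\rangle|=|N|$, and since $N$ is the unique Sylow $p$-subgroup of $G$ this forces $\langle x_p\rangle=N$, which is cyclic and hence of order $p$. In either case the supersolubility of $G/N\cong M$ combined with $|N|=p$ renders $G$ supersoluble, contradicting the choice of $G$. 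The main obstacle I anticipate is securing $p=\max\pi(G)$: without this reduction the order comparison in the permutable case would not collapse $N$ to a cyclic group.
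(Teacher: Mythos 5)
Your proof is correct and follows the same overall skeleton as the paper's: the cycle $(1)\Rightarrow(2)\Rightarrow(3)\Rightarrow(1)$, with the substance in $(3)\Rightarrow(1)$ handled by Lemma~\ref{SylTower}, passage of the hypothesis to quotients, Lemma~\ref{PrimitiveGr}, and the identification $N=R$ for $r=\max\pi(G)$ so that $M$ is a Hall $p'$-subgroup to which hypothesis $(3)$ applies. The one genuine divergence is the permutable case. The paper invokes Lemma~\ref{MC} (Qiao--Qian--Wang): since $G=M\langle x\rangle$ with $M$ maximal, $|G:M|$ is a prime or $4$, and the case $|G:M|=4$ is killed because it forces $r=2$, making $G$ a $2$-group and hence supersoluble. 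You instead argue directly: writing $\langle x\rangle=\langle x_p\rangle\times\langle x_{p'}\rangle$ and using that $\langle x\rangle\cap M$ is a $p'$-group, you get $|N|=|\langle x_p\rangle|$, whence $\langle x_p\rangle=N$ is cyclic and elementary abelian, so $|N|=p$. Your computation is sound and has the advantage of being self-contained and entirely elementary, avoiding the external citation; the paper's route via Lemma~\ref{MC} is shorter on the page but imports a nontrivial result. Two small remarks: for $(1)\Rightarrow(2)$ you only need Lemma~\ref{UPsn} since the conclusion is a disjunction (the paper additionally derives strong permutability via Proposition~\ref{l4}, which is not required); and in checking that $(3)$ passes to quotients it suffices to treat minimal normal subgroups, which in the soluble setting are elementary abelian, so your Schur--Zassenhaus case split (or simply the standard fact that Hall subgroups of soluble quotients lift) is adequate.
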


\begin{proof}
$(1)\Rightarrow (2)$: If $G$ is supersolvable,
then by Lemma~\ref{UPsn}, every subgroup in $G$ is $\mathbb{P}$-subnormal,
and every Hall subgroup is $\mathbb{P}$-subnormal.
Since~$G$ is soluble, we conclude that every Hall  subgroup
is strongly permutable in~$G$ in view of Proposition~\ref{l4}.

$(2)\Rightarrow (3)$: It is evident since every strongly permutable subgroup is permutable.

$(3)\Rightarrow (1)$:
Assume that every Hall subgroup of a group $G$ is $\mathbb{P}$-subnormal or permutable.
By Lemma~\ref{SylTower}, $G$ has a Sylow tower of supersoluble type,
and for $r=\mathrm{max}\ \pi(G)$, a Sylow $r$-subgroup $R$ is normal in $G$.

Let $N$ be a normal subgroup of $G$, $N\neq 1$,
and let $\overline{H}$ be a Hall $\pi$-subgroup of
$\overline{G}=G/N$ for $\pi\subseteq\pi(G)$.
Then $\overline{H}=HN/N$  for a Hall $\pi$-subgroup $H$ of $G$.
If $H$ is $\mathbb{P}$-subnormal in $G$,
then $\overline{H}$ is $\mathbb{P}$-subnormal in $\overline{G}$
by Lemma~\ref{KPProp}\,(1). If $H$ is permutable in $G$,
then $\overline{H}$ is permutable in $\overline{G}$ by Lemma~\ref{PerProp}\,(1).
Thus the hypothesis holds for $\overline{G}$,
and by induction, $\overline{G}\in\mathfrak{U}$.
As $\mathfrak{U}$ is a subgroup-closed satuarted formation,
we deduce $G$ is a primitive group by Lemma~\ref{PrimitiveGr}.
According to properties of primitive groups, $\Phi(G)=1$,
$G=R\rtimes M$, $R=F(G)$ is a minimal normal  subgroup in $G$, $|R|>r$,
$M$ is a maximal subgroup in $G$, $M\in\mathfrak{U}$.
Note that $M$ is  a Hall subgroup. If $M$ is $\mathbb{P}$-subnormal
in $G$, then $|G:M|=r=|R|$, a contradiction.
Suppose that $M$ is permutable in $G$, i.\,e. $P_G(M)=G$.
In that case, there is $x\in G\setminus M$ such that $G=M\langle x\rangle$.
In view of Lemma~\ref{MC}, $|G:M|=4$,
but $r=\mathrm{max}\ \pi(G)$. Hence $G$ is a $2$-group.
\end{proof}

\begin{pcorollary}\label{BiU}
 If every Sylow subgroup of a biprimary group $G$ is
 $\mathbb{P}$-subnormal or permutable,
 then $G$ is supersoluble.
 Conversely, in a supersoluble biprimary group
 every Sylow subgroup is $\mathbb{P}$-subnormal
 and strongly permutable.
\end{pcorollary}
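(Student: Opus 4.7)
My plan is to reduce both implications to Corollary~\ref{c42}, exploiting that a biprimary group has a very small lattice of Hall subgroups. If $\pi(G)=\{p,q\}$, the only subsets of $\pi(G)$ are $\emptyset$, $\{p\}$, $\{q\}$ and $\{p,q\}$, so the Hall subgroups of $G$ are exactly the trivial subgroup, the Sylow $p$-subgroup, the Sylow $q$-subgroup, and $G$ itself. Since $1$ and $G$ are trivially both $\mathbb{P}$-subnormal and permutable in $G$, the hypothesis ``every Sylow subgroup is $\mathbb{P}$-subnormal or permutable'' is literally equivalent, in the biprimary case, to ``every Hall subgroup is $\mathbb{P}$-subnormal or permutable.''

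The forward implication then follows at once from Corollary~\ref{c42}, specifically the equivalence $(1)\Leftrightarrow(3)$ proved there, which delivers supersolubility of $G$ from exactly this Hall-subgroup hypothesis. No further work is needed for this half.

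For the converse I would start from a supersoluble biprimary $G$ and apply Lemma~\ref{UPsn} to conclude that every subgroup, and in particular every Sylow subgroup, is $\mathbb{P}$-subnormal in $G$. Since supersolubility implies solubility and the Sylow subgroups of a biprimary group are Hall subgroups, Proposition~\ref{l4} upgrades $\mathbb{P}$-subnormality to strong permutability for each Sylow subgroup, yielding the stronger conclusion claimed in the converse.

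The only step requiring any care is the enumeration of Hall subgroups in the first paragraph, where one must verify that no Hall subgroup of a biprimary group can escape being a Sylow subgroup (or one of the two trivial extremes); once this is in hand, the statement is a direct specialization of the previously established equivalence, and I do not anticipate any substantive obstacle.
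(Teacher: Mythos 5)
Your proof is correct and is exactly the derivation the paper intends: it treats this as an immediate consequence of Corollary~\ref{c42} (the Hall subgroups of a biprimary group being precisely $1$, the Sylow subgroups, and $G$) together with Lemma~\ref{UPsn} and Proposition~\ref{l4} for the converse, which is what you do. One small inaccuracy worth noting: the trivial subgroup is not \emph{trivially} $\mathbb{P}$-subnormal in $G$ (that is a genuine restriction on $G$), but it is trivially permutable, which is all your disjunctive hypothesis requires, so the argument stands.
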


According to Proposition~\ref{l4}, for a Hall subgroup of a soluble group,
$\mathbb{P}$-subnormality and strongly permutability are equivalent.
In simple groups, this is not true.

\begin{example}
 In $L_2(8)$, a Hall $\{2,7\}$-subgroup is strongly permutable,
 but it is not $\mathbb{P}$-subnormal.
\end{example}

\begin{example}
 In $L_2(9)$, a Sylow $2$-subgroup is strongly permutable,
 but it is not $\mathbb{P}$-subnormal.
\end{example}

\begin{example}
 In $L_2(5)$, a Sylow 2-subgroup is $\mathbb{P}$-subnormal,
 but it is not permutable.
\end{example}

\begin{theorem}\label{th0}
 Let $G$ be a simple non-abelian group
 and let $R$ be a Sylow $r$-subgroup of $G$.

 $(1)$ If $R$ is $\mathbb{P}$-subnormal in~$G$,
 then $r=2$ and $G$ is isomorphic to $L_2(7)$, $L_2(11)$
 or $L_2(2^m)$ and~$2^m+1$ is a prime.

 $(2)$ If $R$ is strongly permutable and $\mathbb{P}$-subnormal in~$G$,
 then $r=2$ and $G\cong L_2(7)$.
\end{theorem}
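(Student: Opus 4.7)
The plan is to use the chain guaranteed by $\mathbb{P}$-subnormality to reduce, via the classification of $2$-transitive groups of prime degree, to a short list of simple candidates, and then narrow down by direct inspection.

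For part (1), the chain $R=H_0<H_1<\ldots<H_n=G$ of prime indices gives a maximal subgroup $M=H_{n-1}$ with $R\le M$ and $|G:M|=p$ prime. Since $|R|=|G|_r$ and $R\le M$, we immediately force $r\ne p$. Simplicity of $G$ makes $M_G=1$, so the coset action embeds $G$ into $S_p$; being a nonabelian transitive simple subgroup of $S_p$, Burnside's theorem on transitive permutation groups of prime degree forces $G$ to be $2$-transitive. The CFSG-based classification of nonabelian simple $2$-transitive groups of prime degree then leaves only $A_p$, $L_d(q)$ with $(q^d-1)/(q-1)=p$, the exceptional action of $L_2(11)$ on $11$ points, and the Mathieu groups $M_{11}$ and $M_{23}$. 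I would eliminate every case except the stated three families by iterating $\mathbb{P}$-subnormality inside $M$ (using Lemma~\ref{KPProp} along the remainder of the chain) and inspecting the maximal subgroups of each candidate; the point and hyperplane stabilizers in $L_d(q)$ for $d\ge 3$ (other than $L_3(2)\cong L_2(7)$), and the maximal subgroups of $M_{11}$ and $M_{23}$, have the wrong Sylow structure to admit the required further chain. The surviving families I would verify positively via explicit chains: $D_8\le S_4\le L_2(7)$ with indices $3,7$; $V_4\le A_4\le A_5\le L_2(11)$ with indices $3,5,11$; and in $L_2(2^m)$ with $2^m+1$ prime, the Sylow $2$-subgroup $R$ is the unipotent radical of a Borel subgroup $B$, which is a Frobenius group and hence supersoluble, so $R$ is $\mathbb{P}$-subnormal in $B$ by Lemma~\ref{UPsn}, and the step from $B$ to $G$ has prime index $2^m+1$.

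For part (2), part (1) already restricts $G$ to $L_2(7)$, $L_2(11)$, or $L_2(2^m)$ with $2^m+1$ prime, and strong permutability of $R$ yields $P_G(R)=G$ (taking $U=G$), so $R$ is in particular permutable in $G$. For $L_2(2^m)$ with $m\ge 2$, the unique proper overgroup of $R$ is the Borel $B=N_G(R)$, and a direct check shows no cyclic subgroup of $G\setminus B$ permutes with $R$, so $P_G(R)=B<G$, violating strong permutability. For $L_2(11)$, with $R=V_4$, I would take $U=A_5$ and observe that $V_4\cdot C_5$ has order $20$ while $A_5$ has no subgroup of order $20$; hence $P_{A_5}(V_4)\le A_4<A_5$, again a contradiction. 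This leaves $L_2(7)$, and the direct verification that $P_U(D_8)=U$ for each overgroup $U\in\{D_8, S_4, L_2(7)\}$ completes the proof.

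The main obstacle is the elimination in part (1) of the higher-rank $L_d(q)$ with $d\ge 3$ and the Mathieu groups $M_{11}$ and $M_{23}$, which requires detailed knowledge of their maximal-subgroup lattices and tracking how the $\mathbb{P}$-subnormal chain must behave inside each candidate (the point stabilizers are not simple, so induction cannot be applied mechanically and one has to inspect the Sylow structure of a genuinely non-simple subgroup). A subsidiary hurdle is the affirmative verification in part (2) that the Sylow $2$-subgroup of $L_2(7)$ is strongly permutable: unlike the elimination arguments, this is a positive claim that does not follow from any of the structural lemmas on normalizers and permutizers established earlier.
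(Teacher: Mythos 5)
There is a genuine gap at the heart of part (1). Your reduction via Burnside's theorem and the classification of $2$-transitive groups of prime degree leaves an \emph{infinite} list of candidates --- $A_p$ for every prime $p$ and $L_d(q)$ for every pair with $(q^d-1)/(q-1)$ prime, plus $M_{11}$, $M_{23}$ and $L_2(11)$ --- and the elimination of all of these is only asserted, not carried out; you flag it yourself as ``the main obstacle,'' but it is in fact the main content of the theorem. In particular, the claim that the parabolic subgroups of $L_d(q)$ for $d\ge 3$ ``have the wrong Sylow structure'' is exactly what must be proved, and it is delicate: $L_3(3)$ and $L_3(5)$ \emph{do} survive the natural first reduction (they admit a chain of subgroups with all indices prime from $1$ up to $G$), and the paper has to kill them by checking against the Atlas that no Sylow subgroup of the point stabilizer $C_3^2{:}GL_2(3)$ (resp.\ $C_5^2{:}GL_2(5)$) is $\mathbb{P}$-subnormal there. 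The paper's reduction is different and avoids the infinite list altogether: since the trivial subgroup is $\mathbb{P}$-subnormal in the $r$-group $R$, concatenating chains shows the trivial subgroup is $\mathbb{P}$-subnormal in $G$, and the simple groups possessing a maximal chain of subgroups with all indices prime were classified by Kazarin and by Cameron--Solomon, giving at once the finite list $L_2(7)$, $L_2(11)$, $L_3(3)$, $L_3(5)$, $L_2(2^m)$ with $2^m+1$ prime; everything after that is a concrete case check. If you want to keep your route, you must actually run the induction on $A_p$ (whose point stabilizer $A_{p-1}$ is again simple) and analyze the parabolics of $L_d(q)$ and the maximal subgroups of $M_{11}$, $M_{23}$ in detail.

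Two secondary problems. First, your positive verification for $L_2(2^m)$ rests on the false statement that the Borel subgroup $B\cong C_2^m{:}C_{2^m-1}$ is supersoluble because it is a Frobenius group: $A_4$ is a Frobenius group and is not supersoluble, and for $m\ge 2$ the kernel of $B$ is an irreducible module for $C_{2^m-1}$, so $B$ is never supersoluble and Lemma~\ref{UPsn} does not apply. The conclusion survives for the simpler reason that $R\lhd B$ with cyclic quotient, so a chain of prime indices from $R$ to $B$ exists directly. Second, in part (2) the positive claim that the Sylow $2$-subgroup of $L_2(7)$ is strongly permutable is left unproved; the paper supplies the argument you are missing: $R$ lies in two non-conjugate subgroups $A=RC_3$ and $B=RC_3^{x}$ isomorphic to $S_4$, so $G=\langle A,B\rangle\le P_G(R)$, and every proper overgroup of $R$ is a copy of $S_4$ in which $R$ has index $3$, whence $R$ is strongly permutable. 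Your eliminations of $L_2(11)$ (no subgroup of order $20$ in $A_5$) and of $L_2(2^m)$ in part (2) are correct and agree with the paper's.
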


\begin{proof}
Since~$R$ is $\mathbb{P}$-subnormal in~$G$,
there is a subgroup chain
$$
R=H_0\le H_1\le H_2\le\ldots\le H_{n-1}=H\le H_n=G
$$
such that $|H_{i+1}:H_i|\in\mathbb{P}$.
It is clear that~$R$ is $\mathbb{P}$-subnormal in~$H$.
Let $|G:H|=p$. Since $H_G=1$,
the representation of~$G$ on the set of left  cosets
by~$H$ is exactly of degree $p$~\cite[I.6.2]{hup} and~$G$
is ismorphic to a subgroup of the symmetric group~$S_p$ of order~$p$
for $p=\mathrm{max}\ \pi(G)$, $H$ is a Hall $p^\prime$-subgroup of $G$.
From Lemma~\ref{KPMax} it follows that a Sylow $p$-subgroup of~$G$
is not $\mathbb{P}$-subnormal in~$G$, therefore~$r<p$.
Since the unit subgroup is $\mathbb{P}$-subnormal in~$R$,
the unit subgroup is $\mathbb{P}$-subnormal in~$H$ and in~$G$.
According to \cite{kaz}, \cite[p. 342]{cs}, $G$ is
isomorphic to one of the following groups
$$
L_2(7), \ L_2(11), \ L_3(3), \ L_3(5), \ L_2(2^m),
2^m+1 \mbox{ is prime}.
$$

Let $G\cong L_2(7)$. Then $|G|=2^3\cdot 3\cdot 7$, $p=7$, $H\cong S_4$.
In $S_4$, a Sylow $2$-subgroup is $\mathbb{P}$-subnormal,
a Sylow $3$-subgroup is not $\mathbb{P}$-subnormal, therefore~$r=2$.
In $L_2(7)$, there are two conjugate classes that are isomorphic to $S_4$.
Since all Sylow 2-subgroups are conjugate,
we get $R$ is contained in two non-conjugate subgroups~$A\le G$ and~$B\le G$
that are isomorphic to~$S_4$. Since~$A=RC_3$, $B=RC_3^x$, $x\in G$,
we obtain $G=\langle A,B\rangle \le P_G(R)$. So $R$ is permutable in~$G$.
If~$R<U<G$, then $U\cong S_4$, therefore $R$ is strongly permutable in~$G$.

Let $G\cong L_2(11)$. Then $|G|=2^2\cdot 3\cdot 5\cdot 11$,
$p=11$, $H\cong L_2(5)$. In $L_2(5)$,
only a Sylow 2-subgroup is $\mathbb{P}$-subnormal,
therefore~$r=2$. But $R$ is not permutable in $H\cong L_2(5)$,
hence $R$ is not strongly permutable in~$G\cong L_2(11)$.

Let $G\cong L_3(3)$. Then $|G|=2^4\cdot 3^3\cdot 13$,
$p=13$ and $H\cong M_9:S_3\cong C_3^2:GL_2(3)$.
Since $|H|=2^4\cdot 3^3$ and~$H$ is not $3$-closed,
we have~$r\ne 3$ by Lemma~\ref{KPMax} and~$r=2$.
But a Sylow $2$-subgroup $R$ is not $\mathbb{P}$-subnormal in~$H$ according to~\cite{atl}. Therefore in~$G\cong L_3(3)$, there are no $\mathbb{P}$-subnormal Sylow subgroups.

Let $G\cong L_3(5)$. Then $|G|=2^5\cdot 3\cdot 5^3 \cdot 31$,
$p=31$ and $H\cong C_5^2:GL_2(5)$. Since $|H|=2^5\cdot 3\cdot 5^3$
and~$H$ is not $5$-closed, we get~$r\ne 5$ by Lemma~\ref{KPMax} and~$r\in \{2,3\}$.
But a Sylow $2$--subgroup and $3$-subgroup
are not $\mathbb{P}$-subnormal in~$H$ according to~\cite{atl}.
Therefore in~$G\cong L_3(5)$, there are no $\mathbb{P}$-subnormal Sylow subgroups.

Let $G\cong L_2(2^m)$, where $2^m+1$ is prime.
Then $|G|=2^m(2^m-1)(2^m+1)$, $p=2^m+1$ and $H=N_G(Q)\cong C_2^m:C_{2^m-1}$,
$Q\cong C_2^m$ is a Sylow $2$-subgroup of ~$G$.
Since $Q$ is $\mathbb{P}$-subnormal in~$H$,
we deduce $Q$ is $\mathbb{P}$-subnormal in~$G$.
Suppose that $\langle g\rangle Q=Q\langle g\rangle$ for some~$g\in G$.
Then $\langle g\rangle Q\le N_G(Q)$ according to~\cite[II.8.27]{hup}.
Hence~$P_G(Q)=N_G(Q)$ and a Sylow $2$-subgroup of $G\cong L_2(2^m)$
is $\mathbb{P}$-subnormal in~$G$, but it is not permutable in~$G$.
Suppose that~$r\ne 2$. Then $R\le N_G(Q)=H$ and~$R$ is $\mathbb{P}$-subnormal
in~$RQ$ by Lemma~\ref{KPProp}\,(2), since~$R$ is $\mathbb{P}$-subnormal in~$H$
and~$H$ is soluble. By Lemma~\ref{KPMax}, $R$ is normal in~$RQ$ and~$R\le C_G(Q)$,
which is impossible  in~$G\cong L_2(2^m)$.
\end{proof}

\begin{corollary} [{\cite[Theorem 2.1]{km2020umj}}]\label{c01}
 If a Sylow $r$-subgroup of a group $G$ is $\mathbb{P}$-subnormal and~$r>2$,
 then~$G$ is $r$-soluble.
\end{corollary}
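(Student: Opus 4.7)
The plan is to run a minimal-counterexample induction on $|G|$, peel off any proper nontrivial normal subgroup, and thereby reduce to the case where $G$ is a non-abelian simple group; the latter case is then immediately dispatched by Theorem~\ref{th0}(1). So suppose, for contradiction, that $G$ is a counterexample of minimal order: its Sylow $r$-subgroup $R$ is $\mathbb{P}$-subnormal, $r > 2$, yet $G$ fails to be $r$-soluble.

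First I would carry out the reduction step. Recall that the class of $r$-soluble groups is subgroup-closed, quotient-closed, and closed under extensions. Suppose $N$ is any proper nontrivial normal subgroup of $G$. Since $N\trianglelefteq G$, $R\cap N$ is a Sylow $r$-subgroup of $N$, and by Lemma~\ref{KPProp}(1) it is $\mathbb{P}$-subnormal in $N$; minimality of $G$ then forces $N$ to be $r$-soluble. Applying Lemma~\ref{KPProp}(1) once more, $RN/N$ is a $\mathbb{P}$-subnormal Sylow $r$-subgroup of $G/N$, and $|G/N| < |G|$, so $G/N$ is $r$-soluble by induction. Closure under extensions yields that $G$ itself is $r$-soluble, which contradicts our choice of $G$.

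Therefore $G$ has no proper nontrivial normal subgroup, i.e.\ $G$ is simple. If $G$ were abelian it would be cyclic of prime order and trivially $r$-soluble, so $G$ must be non-abelian simple. Now Theorem~\ref{th0}(1) applies: the existence of a $\mathbb{P}$-subnormal Sylow $r$-subgroup in such a $G$ forces $r = 2$, directly contradicting the hypothesis $r > 2$. This contradiction completes the argument.

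The genuine technical content, namely the classification of non-abelian simple groups admitting a $\mathbb{P}$-subnormal Sylow subgroup, is already packaged in Theorem~\ref{th0}(1); once that is in hand, the only thing to be careful about is that both the hypothesis of $\mathbb{P}$-subnormality of a Sylow $r$-subgroup and the conclusion of $r$-solubility are inherited by normal subgroups and by quotients, which is precisely what Lemma~\ref{KPProp}(1) supplies. No further obstacle is foreseen.
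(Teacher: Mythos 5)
Your proposal is correct and follows essentially the same route as the paper: both arguments use Theorem~\ref{th0}(1) to rule out the non-abelian simple case and Lemma~\ref{KPProp}(1) to pass the hypothesis to a nontrivial proper normal subgroup $N$ and to $G/N$, concluding by induction and extension-closure of $r$-solubility. The only difference is presentational (you frame it as a minimal counterexample and explicitly note the trivial abelian simple case), not mathematical.
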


\begin{proof}
By Theorem~\ref{th0}\,(1), $G$ is not simple.
Let~$N$ be a normal subgroup of~$G$, $1\ne N\ne G$.
Then~$R\cap N$ is a Sylow $r$-subgroup of~$N$
and~$R\cap N$ is $\mathbb{P}$-subnormal in~$N$
in view of Lemma~\ref{KPProp}\,(1).
By induction, $N$ is $r$-soluble.
Note that $RN/N$ is a Sylow $r$-subgroup of~$G/N$
and~$RN/N$ is $\mathbb{P}$\nobreakdash-\hspace{0pt}subnormal in~$G/N$
in view of Lemma~\ref{KPProp}\,(1). By induction, $G/N$ is $r$-soluble.
Therefore~$G$ is $r$-soluble.
\end{proof}

\begin{corollary} [{\cite[Corollary 2.1.1]{km2020umj}}]\label{c02}
 If a Sylow 3-subgroup and Sylow 5\nobreakdash-\hspace{0pt}subgroup of a group $G$
 is $\mathbb{P}$-subnormal, then~$G$ is soluble.
\end{corollary}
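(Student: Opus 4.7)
The plan is to proceed by induction on $|G|$, leaning on Corollary~\ref{c01} to reduce to the simple non-abelian case, and then on a classification result about non-abelian simple groups of order prime to~$3$.

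First I would check that the hypothesis descends to normal subgroups and quotients: for $N\lhd G$ and $P$ a Sylow $p$-subgroup of $G$ with $p\in\{3,5\}$, the group $P\cap N$ is a Sylow $p$-subgroup of $N$ and $PN/N$ is a Sylow $p$-subgroup of $G/N$, and both are $\mathbb{P}$-subnormal in the respective ambient group by Lemma~\ref{KPProp}(1). So whenever $1\neq N\neq G$, the inductive hypothesis gives the solubility of $N$ and of $G/N$, hence of~$G$. We may therefore assume $G$ has no proper nontrivial normal subgroup; since the cyclic-of-prime-order case is trivially soluble, we are reduced to $G$ simple non-abelian.

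For such a $G$, Corollary~\ref{c01} applied to the primes $3$ and $5$ gives that $G$ is simultaneously $3$-soluble and $5$-soluble. In a simple group, $p$-solubility forces $G$ to be either a $p$-group or a $p'$-group, and the $p$-group option is excluded because a non-abelian simple group has trivial centre. Hence $|G|$ is coprime to both $3$ and to~$5$. (The same conclusion can be read off directly from Theorem~\ref{th0}(1): a nontrivial $\mathbb{P}$-subnormal Sylow $p$-subgroup in a simple non-abelian group would force $p=2$.)

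The hard part will be the final step, which needs a fact from the classification of simple groups: every non-abelian simple group of order prime to~$3$ is a Suzuki group $Sz(q)$ with $q=2^{2n+1}\ge 8$ (Thompson). For each such $q$, the congruence $q\equiv \pm 2 \pmod 5$ gives $q^2+1\equiv 0 \pmod 5$, so $5\mid |Sz(q)|=q^2(q-1)(q^2+1)$; this contradicts $5\nmid |G|$ and finishes the proof. All the preceding steps are routine inductive bookkeeping; the only nonelementary ingredient is Thompson's theorem on simple $\{3\}'$-groups.
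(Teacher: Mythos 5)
Your proof is correct and rests on the same two ingredients as the paper's: Corollary~\ref{c01} to get $3$- and $5$-solubility, and the non-existence of a non-abelian simple group of order coprime to $15$. The paper skips the induction by passing directly to a normal series whose factors are $3$-groups, $5$-groups or $\{3,5\}'$-groups and citing the solubility of $\{3,5\}'$-groups from Gorenstein, whereas you re-derive that fact from Thompson's classification of simple $3'$-groups and the divisibility $5\mid |Sz(q)|$ --- a correct but slightly longer route to the same citation-level fact.
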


\begin{proof}
By Corollary~\ref{c01}, $G$ is 3-soluble and 5-soluble.
Hence $G$ has a normal series, in which factors are 3-groups,
5\nobreakdash-\hspace{0pt}groups or $\{3,5\}^\prime$\nobreakdash-\hspace{0pt}groups.
Since $\{3,5\}^\prime$-groups are soluble~\cite[Theorem, p.18]{gor},
we conclude $G$ is soluble.
\end{proof}

\section{Groups with permutable and $\mathbb{P}$-subnormal primary cyclic subgroups}

Let $\mathfrak{F}$ be a class of groups.
A group $G$ is called a minimal non-$\mathfrak{F}$-group if
$G\notin\mathfrak{F}$ but every proper subgroup of $G$ belongs to $\mathfrak{F}$.
Minimal non-$\mathfrak{N}$-groups are also called Schmidt groups.
We remind the properties of Schmidt groups and minimal non-supersoluble groups
we need.

\begin{lemma}[{\cite[Theorem~1.1, 1.2, 1.5]{umk},\cite[Theorem~3]{Ball_Sch}}]\label{ls1}
Let $S$ be a Schmidt group. Then the following statements hold.

$(1)$~$S=P\rtimes Q$, where $P$ is a normal Sylow $p$-subgroup
and $Q$ is a non-normal Sylow $q$-subgroup, $p$ and $q$ are different primes and

\ \ $(1.1)$~if $P$ is abelian, then $P$ is elementary abelian of order~$p^m$,
            where $m$ is the order of $p$ modulo $q$;

\ \ $(1.2)$~if $P$ is not abelian, then $Z(P)=P'=\Phi (P)$ and $|P/Z(P)| = p^m$;

\ \ $(1.3)$~if $p>2$, then $P$ has the exponent~$p$;
            for $p=2$, the exponent of~$P$ is not more than~$4$.

\ \ $(1.4)$~$Q=\langle y\rangle$ is a cyclic subgroup and $y^q\in Z(S)$.

$(2)$~$G$ has exactly two classes of conjugate maximal subgroups
\[
\{P\times \langle y^q\rangle\},~~ \{\Phi (P)\times \langle x^{-1}yx\rangle~|~x\in P\setminus \Phi (P)\}.
\]
\end{lemma}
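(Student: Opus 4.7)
The plan is to follow the classical Schmidt treatment, exploiting the defining property that $S$ is non-nilpotent while every proper subgroup of $S$ is nilpotent.

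First I would show $\pi(S)=\{p,q\}$ for two distinct primes and extract the normal Sylow subgroup $P$. If three primes divided $|S|$, then for any two Sylow subgroups $P_r,P_s$ the subgroup $\langle P_r,P_s\rangle$ would be proper (a third Sylow is missing), hence nilpotent; assembling the resulting pairwise commutations would force $S$ itself nilpotent, a contradiction. Once $\pi(S)=\{p,q\}$, Burnside's normal $p$-complement theorem (or a Sylow count on maximal subgroups) forces exactly one Sylow to be normal; I denote it $P$ and pick $Q\in\mathrm{Syl}_q(S)$, so that $S=P\rtimes Q$.

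Next I would handle $Q$: if $Q$ were non-cyclic, then for every maximal $Q_0<Q$ the subgroup $PQ_0$ is proper and nilpotent, so $[P,Q_0]=1$; since a non-cyclic $q$-group is generated by its maximal subgroups, this gives $[P,Q]=1$ and hence $S$ nilpotent, a contradiction. Thus $Q=\langle y\rangle$ is cyclic, and the same argument applied to $\langle y^q\rangle<Q$ yields $y^q\in C_S(P)$; combined with $\langle y^q\rangle\leq Z(Q)$ this gives $y^q\in Z(S)$, proving $(1.4)$.

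For the structure of $P$, minimality of $S$ is the key input. If $P$ is abelian, then any proper $Q$-invariant $P_0<P$ makes $P_0Q$ proper and nilpotent, so $[P_0,Q]=1$, and passing to $S/P_0$ reduces to $P$ being an irreducible $\mathbb{F}_p[Q]$-module; Maschke's theorem then forces elementary abelianness, and the representation theory of cyclic groups over $\mathbb{F}_p$ gives $\dim_{\mathbb{F}_p}P$ equal to the multiplicative order of $p$ modulo $q$, which is $(1.1)$. If $P$ is non-abelian, the proper nilpotent subgroup $\Phi(P)Q$ yields enough commutator control to deduce $Z(P)=P'=\Phi(P)$ by reducing the $Q$-action to the abelian case in $P/\Phi(P)$; the exponent bounds $(1.3)$ then follow from the resulting class-$2$ structure of $P$ together with the semisimple $Q$-action on $P/\Phi(P)$. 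For $(2)$, any maximal subgroup of $S$ is nilpotent, hence a direct product of its Sylow subgroups: those containing $P$ collapse to $P\times\langle y^q\rangle$, while those not containing $P$ are $p$-complements modulo $\Phi(P)$ and form a single $P$-conjugacy class as stated. The main obstacle will be $(1.2)$: establishing the tight coincidence $Z(P)=P'=\Phi(P)$ requires delicate commutator computations using the $Q$-action on $P/\Phi(P)$ together with the strong constraint that every proper subgroup of $S$ is nilpotent, and is the most technical step of Schmidt's original program.
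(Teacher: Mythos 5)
The paper does not prove Lemma~\ref{ls1} at all: it is imported verbatim from the literature (Monakhov's survey on Schmidt subgroups and Ballester-Bolinches--Esteban-Romero--Robinson), so there is no internal proof to compare against. Judged on its own terms, your sketch follows the correct classical outline (biprimary reduction, normal Sylow $p$-subgroup, cyclicity of $Q$ via generation by maximal subgroups, irreducible module argument for abelian $P$, and the two conjugacy classes of maximal subgroups), and the later steps --- $(1.4)$, the abelian case of $(1.1)$, and $(2)$ --- are essentially right.

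However, the opening of the argument contains a genuine gap, and it sits exactly where the real content of Schmidt's theorem lives. Your claim that for $|\pi(S)|\ge 3$ the subgroup $\langle P_r,P_s\rangle$ generated by two Sylow subgroups ``would be proper (a third Sylow is missing)'' is false: a subgroup generated by a Sylow $r$-subgroup and a Sylow $s$-subgroup need not have order $|P_r|\,|P_s|$ and can perfectly well be the whole group. The standard route is quite different: one first proves that a group all of whose proper subgroups are nilpotent is \emph{soluble} (Schmidt's intersection argument on maximal subgroups, or a Frobenius normal $p$-complement argument), then takes a normal maximal subgroup $M$ of prime index $q$; since $M$ is nilpotent, its Sylow subgroups for primes other than $q$ are characteristic in $M$ and hence normal in $S$, and if two such primes occurred each $P_i\rtimes Q$ would be proper, hence nilpotent, forcing $S$ nilpotent. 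This simultaneously yields $|\pi(S)|=2$ and the normal Sylow $p$-subgroup; your appeal to Burnside's normal $p$-complement theorem does not work as stated, since nothing in the setup puts a Sylow subgroup in the centre of its normaliser. Finally, the exponent bound in $(1.3)$ for $p=2$ and the identity $Z(P)=P'=\Phi(P)$ in $(1.2)$ are only gestured at; as you acknowledge, these require the detailed commutator calculus of Schmidt's original analysis and are not recoverable from ``class-$2$ structure plus semisimplicity'' alone.
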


\begin{lemma}[{\cite{D66,BallR07}}]\label{minu}
Let $G$ be a minimal non-supersoluble group.
Then the following statements hold.

$(1)$~$G$ is soluble and $|\pi(G)| \leq 3$;

$(2)$~If $G$ is not a Schmidt group,
 then $G$ has a Sylow tower of supersolvable type;

$(3)$~$G$ has a unique normal Sylow subgroup $P$ and $P = G^\mathfrak{U}$;

$(4)$~$|P/\Phi(P)| > p$ and $P/\Phi(P)$ is a minimal normal subgroup of $G/\Phi(P)$;

$(5)$~If $\Phi(G)=1$, then $O_{p^\prime}(G)=1$
      and~$Q$ is either nonabelian of order $q^3$ and exponent $q$,
      or $Q$ is a cyclic $q$-group,
      or $Q$ is a $q$-group with a cyclic subgroup of index~$q$,
      or $Q$ is a supersoluble Schmidt group.
\end{lemma}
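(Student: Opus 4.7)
The strategy is to exploit that $\mathfrak{U}$ is a subgroup-closed saturated formation, combining Lemma~\ref{PrimitiveGr} with the structure of Schmidt groups recalled in Lemma~\ref{ls1}. I would treat the five parts roughly in the order $(1) \to (3) \to (4) \to (2) \to (5)$, since the later statements rely on having already identified the normal Sylow subgroup $P = G^{\mathfrak U}$.

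For the solubility half of $(1)$ I would reproduce Doerk's classical argument: take a minimal normal subgroup $N$ of $G$; by minimality of $G$ the quotient $G/N$ is supersoluble, and every proper subgroup of $G$ is supersoluble and hence soluble. The only obstruction to solubility of $G$ itself is $N \cong S \times \cdots \times S$ with $S$ non-abelian simple, and one rules this out by exhibiting a proper subgroup of $G$ sitting above $N_G(S_1)$ whose supersolubility contradicts the structure of $\mathrm{Aut}(N)$. The bound $|\pi(G)| \le 3$ then comes from picking, for any $\pi \subsetneq \pi(G)$ with $|\pi| \ge 4$, a Hall $\pi$-subgroup of $G$: each is proper and supersoluble, so has a Sylow tower, and reassembling those towers contradicts $G \notin \mathfrak U$.

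For $(3)$ and $(4)$, set $P := G^{\mathfrak U}$, the supersoluble residual. Saturation of $\mathfrak U$ gives $P \not\le \Phi(G)$, so $G/\Phi(G)$ is still a minimal non-supersoluble group and Lemma~\ref{PrimitiveGr} makes it primitive soluble; its socle is $P\Phi(G)/\Phi(G)$, which pulls back to a unique normal Sylow subgroup $P$ of $G$, settling $(3)$. Applying the same reasoning inside $G/\Phi(P)$ yields $(4)$, because $|P/\Phi(P)| = p$ would make the socle cyclic of prime order and force $G/\Phi(P) \in \mathfrak U$. Part $(2)$ then follows by induction on $|G|$: if $G$ is not a Schmidt group, the largest prime $r = \max\pi(G)$ cannot coincide with the prime $q$ of Lemma~\ref{ls1}, so a Sylow $r$-subgroup is either $P$ itself or lies in a supersoluble proper subgroup having a Sylow tower, and normality propagates up the tower.

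For $(5)$ I would assume $\Phi(G) = 1$; then $P$ is elementary abelian, self-centralizing, and complemented by some $Q$, so $O_{p'}(G) = 1$ and $Q$ acts faithfully and irreducibly on $P$. Since every proper subgroup of $Q$ is supersoluble, a case analysis on whether $Q$ is nilpotent, supersoluble but not nilpotent, or minimal non-supersoluble, coupled with Lemma~\ref{ls1} applied to Schmidt sections of $Q$, pins down the four listed alternatives, the extraspecial $q^{1+2}$ of exponent $q$ arising when $Q$ is minimal non-abelian of odd order. I expect the main obstacle to be the solubility claim in $(1)$: it cannot be extracted from formation bookkeeping alone and requires the explicit exclusion of non-abelian minimal normal subgroups via the $\mathrm{Aut}(N)$-analysis above.
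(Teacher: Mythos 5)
The paper does not prove Lemma~\ref{minu} at all: it is imported by citation from Doerk \cite{D66} and Ballester-Bolinches--Esteban-Romero \cite{BallR07}, so there is no in-paper argument to compare against and your sketch has to stand on its own as a reconstruction of those results. It does not, and the most serious failure is exactly where you locate the crux, namely solubility in $(1)$. If a minimal normal subgroup $N$ is non-abelian, then $N$ cannot be proper (a proper subgroup is supersoluble, hence soluble, so a proper minimal normal subgroup is elementary abelian); thus $N=G$, and since a simple direct factor of $N$ would otherwise be a proper non-supersoluble subgroup, $G$ is itself simple. Then $N_G(S_1)=G$ and your plan of ``exhibiting a proper subgroup above $N_G(S_1)$'' and analysing $\mathrm{Aut}(N)$ has nothing to act on. The standard (and elementary) repair is different: a supersoluble group has a Sylow tower of supersoluble type and hence a normal $r$-complement for $r$ the smallest prime dividing its order, so every proper subgroup of $G$ is $r$-nilpotent for $r=\min\pi(G)$; if $G$ is not $r$-nilpotent it is a minimal non-$r$-nilpotent group, hence a Schmidt group by It\^{o}'s theorem and in particular soluble, while if $G$ is $r$-nilpotent it is an extension of a proper (supersoluble) normal $r$-complement by an $r$-group. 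Note also that your opening step ``by minimality of $G$ the quotient $G/N$ is supersoluble'' is false in general: $\mathrm{SL}(2,3)$ is minimal non-supersoluble and maps onto $A_4$. Minimality only gives that $G/N$ is supersoluble or again minimal non-supersoluble, which is enough for an induction but is not what you asserted.

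The argument offered for $|\pi(G)|\le 3$ also fails: reassembling Sylow towers of proper Hall subgroups can at best show that $G$ has a Sylow tower, and possession of a Sylow tower does not contradict $G\notin\mathfrak U$. The bound really falls out of the module-theoretic analysis underlying $(4)$ and $(5)$: with $\overline G=G/\Phi(G)=V\rtimes M$, $V=P\Phi(G)/\Phi(G)$ and $M$ supersoluble acting faithfully and irreducibly on $V$ with $\dim V>1$, every proper subgroup $M_0<M$ yields a proper, hence supersoluble, subgroup $VM_0$, so $V$ restricted to $M_0$ has a chain of submodules with factors of order $p$ while $V$ itself does not; classifying the supersoluble $M$ that are minimal for this failure is precisely the content of $(5)$ and the genuinely hard part of \cite{BallR07}, and it is from that classification (each alternative for $Q$ involves at most two primes) that $|\pi(G)|\le 3$ is read off. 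Your treatment of $(5)$ as ``a case analysis pins down the four alternatives'' therefore conceals the main work, and your argument for $(2)$ does not parse, since $G$ is assumed not to be a Schmidt group and so carries no ``prime $q$ of Lemma~\ref{ls1}''; the usual proof of $(2)$ again runs through the fact that all proper subgroups are $r$-closed for $r=\max\pi(G)$ and an It\^{o}-type minimality argument. By contrast, your outline for $(3)$ and $(4)$ --- saturation of $\mathfrak U$, Lemma~\ref{PrimitiveGr}, Gasch\"{u}tz's inclusion $\Phi(P)\le\Phi(G)$ --- is essentially the right skeleton, modulo the need to justify that all proper quotients of $G/\Phi(G)$ are supersoluble before invoking primitivity.
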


\begin{lemma}[{\cite[Lemma~1]{M95}}]\label{ls2}
 Let $S=P\rtimes Q$ be a supersoluble Schmidt group.
 Then $P=\langle x\rangle$ is a normal subgroup of order~$p$,
 $Q=\langle y\rangle$ is a cyclic subgroup of order~$q^b$,
 where $q$ divides $p-1$.
\end{lemma}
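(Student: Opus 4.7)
My plan is to apply Lemma~\ref{ls1} to unpack the structure of $S$ and then exploit the classical fact that every maximal subgroup of a supersoluble group has prime index. By Lemma~\ref{ls1}\,(1), $S=P\rtimes Q$ with $P$ a normal Sylow $p$-subgroup; by~(1.4), $Q=\langle y\rangle$ is cyclic with $y^q\in Z(S)$, so writing $|Q|=q^b$ already gives the required form of~$Q$. It remains to show that $P$ is cyclic of order~$p$ and that $q\mid p-1$.

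The central step is to compute the index of the maximal subgroup $M=\Phi(P)\times\langle x^{-1}yx\rangle$ supplied by Lemma~\ref{ls1}\,(2) for any $x\in P\setminus\Phi(P)$. Its order is $|\Phi(P)|\cdot q^b$, hence $|S:M|=|P/\Phi(P)|=p^m$: in the non-abelian case~(1.2), $Z(P)=\Phi(P)$ and $|P/Z(P)|=p^m$; in the abelian case~(1.1), $\Phi(P)=1$ and $|P|=p^m$. Since $S$ is supersoluble, every maximal subgroup has prime index, so $p^m=p$, i.e.\ $m=1$.

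If $P$ were non-abelian, then $m=1$ would force $|P/Z(P)|=p$, making $P/Z(P)$ cyclic and hence $P$ abelian --- a contradiction. So $P$ is abelian, and Lemma~\ref{ls1}\,(1.1) gives that $P$ is elementary abelian of order $p^m=p$, so $P=\langle x\rangle$ is cyclic of order~$p$. The same statement yields $m=\mathrm{ord}_q(p)=1$, i.e.\ $p\equiv 1\pmod q$, which is precisely $q\mid p-1$. The main obstacle I anticipate is verifying that the subgroup in Lemma~\ref{ls1}\,(2) really has order $|\Phi(P)|\cdot q^b$ (so that the index computation $|S:M|=p^m$ goes through cleanly in both the abelian and non-abelian case); once this is granted, the rest follows directly from the prime-index property of supersoluble groups.
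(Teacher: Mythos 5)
Your proof is correct. Note that the paper does not prove this lemma at all -- it is quoted from \cite[Lemma~1]{M95} -- so there is no in-paper argument to compare against; what you give is a legitimate self-contained derivation from Lemma~\ref{ls1} plus the classical fact that every maximal subgroup of a supersoluble group has prime index. The one step you flagged as a possible obstacle is fine: Lemma~\ref{ls1}\,(2) presents the second class of maximal subgroups as direct products $\Phi(P)\times\langle x^{-1}yx\rangle$ (which already encodes the standard fact that $\Phi(P)$ centralizes $Q$ in a Schmidt group), and since $\Phi(P)$ is a $p$-group while $\langle x^{-1}yx\rangle$ has order $q^b$, the order is $|\Phi(P)|\cdot q^b$ and the index is $|P:\Phi(P)|=p^m$ in both the abelian case ($\Phi(P)=1$, $|P|=p^m$) and the non-abelian case ($\Phi(P)=Z(P)$, $|P/Z(P)|=p^m$). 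The remaining deductions -- $m=1$ forces $P$ abelian (else $P/Z(P)$ would be cyclic), hence elementary abelian of order $p$, and $\mathrm{ord}_q(p)=1$ gives $q\mid p-1$ -- are all sound.
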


\begin{lemma}\label{lq}
 Let $G=P\rtimes Q$ be a Schmidt group, $Q=\langle y\rangle$.
 If $x\in G$ and $|y|$ does not divide $|x|$,
 then $x\in P\times \langle y^q\rangle$.
\end{lemma}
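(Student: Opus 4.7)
The plan is to argue by contradiction using the description of the maximal subgroups of a Schmidt group given in Lemma~\ref{ls1}(2). Suppose $x\notin P\times\langle y^q\rangle$; I aim to derive that $|y|$ divides $|x|$, contradicting the hypothesis.

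First I would observe that a Schmidt group is non\nobreakdash-\hspace{0pt}nilpotent, in particular non\nobreakdash-\hspace{0pt}cyclic, so $\langle x\rangle$ is a proper subgroup of $G$ and hence $x$ lies in some maximal subgroup $M$. By Lemma~\ref{ls1}(2), the first conjugacy class of maximal subgroups contains only the single (normal) subgroup $P\times\langle y^q\rangle$, which $x$ avoids by assumption. Therefore $M$ belongs to the second class: $M=\Phi(P)\times\langle z\rangle$ where $z=x_0^{-1}yx_0$ for some $x_0\in P\setminus\Phi(P)$.

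Next I would exploit the centrality $y^q\in Z(G)$ from Lemma~\ref{ls1}(1.4): this gives $z^q=x_0^{-1}y^qx_0=y^q$, so $\langle z^q\rangle=\langle y^q\rangle$ and $|z|=|y|=q^c$ (where $q^c=|y|$). Writing $x=az^k$ with $a\in\Phi(P)$ via the internal direct product in $M$, the factors $a$ and $z^k$ commute and have coprime orders (one is a $p$-power, the other a $q$-power), whence $|x|=|a|\cdot|z^k|$. I would then split into two cases. If $q\mid k$, then $z^k=y^k\in\langle y^q\rangle$, and so $x\in\Phi(P)\langle y^q\rangle\subseteq P\times\langle y^q\rangle$, contradicting the choice of $x$. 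Otherwise $\gcd(k,q)=1$, so $|z^k|=q^c=|y|$, and $|y|$ divides $|x|$, again a contradiction.

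The one step that has to be done carefully is the identification of $M$ as a subgroup of the second conjugacy class, which depends on recognising that the first class is a singleton (this uses the normality of $P\times\langle y^q\rangle$). Everything else is a clean case split driven by the direct product structure of $M$ and the centrality of $y^q$ supplied by Lemma~\ref{ls1}, so I do not anticipate any serious obstacle beyond bookkeeping of the two factorisations.
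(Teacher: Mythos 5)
Your proof is correct, but it takes a genuinely different route from the paper's. The paper argues directly: it writes $\langle x\rangle=\langle x_1\rangle\times\langle x_2\rangle$ as the product of its Sylow $p$- and $q$-parts, places $\langle x_1\rangle$ inside $P$ (normality of $P$) and $\langle x_2\rangle$ inside some conjugate $\langle y^g\rangle$ of $Q$ (Sylow's theorem), and then observes that $|y|\nmid|x|$ forces $\langle x_2\rangle$ to be a proper subgroup of the cyclic $q$-group $\langle y^g\rangle$, hence $x_2\in\langle (y^g)^q\rangle=\langle y^q\rangle$. You instead argue by contradiction through the classification of maximal subgroups of a Schmidt group (Lemma~\ref{ls1}\,(2)), locating $x$ in a conjugate $\Phi(P)\times\langle z\rangle$ of $N_G(Q)$ and splitting on whether $q$ divides the exponent $k$ in $x=az^k$; both cases close correctly ($q\mid k$ puts $x$ into $P\times\langle y^q\rangle$, and $\gcd(k,q)=1$ gives $|z^k|=|z|=|y|$ dividing $|x|$). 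Both arguments hinge on the same key fact $y^q\in Z(G)$, which makes $(y^g)^q=y^q$ independent of the conjugating element. The paper's version is a bit leaner, needing only Sylow conjugacy rather than the full maximal-subgroup description and avoiding the case split; yours buys nothing extra but is perfectly sound. One cosmetic remark: as printed, the second class in Lemma~\ref{ls1}\,(2) is parametrized by elements of $P\setminus\Phi(P)$ and so does not literally list $N_G(Q)=\Phi(P)\times Q$ itself; your argument covers that case as well by taking $z=y$, so nothing is lost.
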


\begin{proof}
 Let $\langle x\rangle=\langle x_1\rangle\times\langle x_2\rangle$,
 where $\langle x_1\rangle$ is a Sylow $p$-subgroup and
 $\langle x_2\rangle$ is a Sylow $q$-subgroup of $\langle x\rangle$.
 Since $P$ is normal in $G$, $\langle x_1\rangle\le P$.
 Let $\langle x_2\rangle\leq Q^g=\langle y^g\rangle$, $g\in G$.
 As $|y|$ does not divide $|x_2|$, we conclude $\langle x_2\rangle<\langle y^g\rangle$
 and $x_2\in \langle (y^g)^q\rangle$. But $y^q\in Z(G)$, therefore
 \[
 (y^g)^q=\underbrace{y^g\cdot y^g\cdot \ldots \cdot y^g}_{q}=g^{-1}y^q g=y^q
 \]
 and $x_2\in\langle y^q\rangle$.
 Consequently, $\langle x\rangle\leq P\times \langle y^q\rangle$.
\end{proof}

\begin{lemma}\label{ls}
$(1)$ In a supersoluble Schmidt group, every subgroup is strongly permutable.

$(2)$ Let $G=P\rtimes Q$ be a non-supersoluble Schmidt group.
Then the following statements hold.

\ \ $(2.1)$ $Q$ is not permutable and $N_G(Q)=P_G(Q)=\Phi (P)\times Q\lessdot \, G$.

\ \ $(2.2)$ If $H\leq P$ and $P_G(H)=G$, then either $H=P$ or $H\leq \Phi(G)$.

\ \ $(2.3)$ Every primary permutable subgroup is normal in~$G$,
            and so it is strongly permutable in~$G$.

$(3)$ In Schmidt group~$G$, every subgroup of prime order and
every cyclic subgroup of order~4 is strongly permutable if and only if $G$ is supersoluble.
\end{lemma}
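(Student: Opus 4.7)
My plan is to exploit the explicit Schmidt group structure afforded by Lemmas~\ref{ls1} and~\ref{ls2}, treating the three parts in order. For part~(1), Lemma~\ref{ls2} gives $G=\langle x\rangle\rtimes\langle y\rangle$ with $|x|=p$ and $|y|=q^n$. Since $P$ has prime order, each subgroup either meets $P$ trivially, putting it in some Sylow $Q^g$ as $1$, a central subgroup of $\langle y^q\rangle\le Z(G)$, or the full $Q^g$; or it contains $P$ and so equals $P$, $P\times L$ with $1\ne L\le\langle y^q\rangle$, or $G$. Central subgroups and $P$ are normal in $G$ and hence strongly permutable. A Sylow $q$-subgroup $Q$ is maximal by Lemma~\ref{ls1}(2), and $\langle x\rangle Q=PQ=G$ gives $P_G(Q)=G$. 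A subgroup $P\times L$ is contained only in the abelian $P\times L'$ with $L\le L'\le\langle y^q\rangle$ or in $G$ (in which $Q$ permutes with $P\times L$), so strong permutability follows.

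For part~(2.1), using the standard Schmidt-group fact $\Phi(P)\le Z(G)$, the subgroup $\Phi(P)\times Q$ is well defined and is maximal by Lemma~\ref{ls1}(2) (as a conjugate of $\Phi(P)\times\langle y^x\rangle$). Irreducibility and faithfulness of the $Q/\langle y^q\rangle$-action on $P/\Phi(P)$ yield $C_P(Q)=\Phi(P)$, hence $N_G(Q)=\Phi(P)\times Q$. To see $P_G(Q)=N_G(Q)$, given cyclic $\langle z\rangle$ with $\langle z\rangle Q=Q\langle z\rangle=D$, maximality forces $D\le\Phi(P)\times Q$ (so $z\in N_G(Q)$) or $D=G$; in the latter case the order equation $|G|=|\langle z\rangle|\,|Q|/|\langle z\rangle\cap Q|$ forces the $p$-part of $\langle z\rangle$ to have order $|P|$, making $P$ cyclic and contradicting non-supersolubility.

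For part~(2.2), I pass to $\overline G=G/\Phi(G)$, a Schmidt group with $\overline P$ elementary abelian of order $p^m$ ($m>1$) and $\overline Q$ cyclic of order $q$, satisfying $C_{\overline P}(\overline Q)=0$ by irreducibility. Assume $H\le P$ with $H\ne P$ and $H\not\le\Phi(P)$. For any cyclic $\langle z\rangle$ with $\langle z\rangle H=H\langle z\rangle$, I rule out $\langle z\rangle H=G$: otherwise $\langle\overline z\rangle\overline H=\overline G$, and since $\overline P$ has exponent $p$ the decomposition $\overline z=\overline z_p\,\overline z_q$ forces $\overline z_p\in C_{\overline P}(\overline z_q)\le C_{\overline P}(\overline Q)=0$, whence $\overline H=\overline P$, i.e.\ $H\Phi(P)=P$ and hence $H=P$, a contradiction. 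So $\langle z\rangle H$ lies in a maximal subgroup, necessarily of type~I (type~II maximals do not contain $H$), giving $\langle z\rangle\le P\langle y^q\rangle$ and $P_G(H)\lneq G$. Thus $P_G(H)=G$ forces $H=P$ or $H\le\Phi(P)=\Phi(G)\cap P\le\Phi(G)$. Part~(2.3) is immediate: a permutable $q$-subgroup is not a full Sylow by~(2.1), so lies in $\langle y^q\rangle\le Z(G)$; a permutable $p$-subgroup is $P$ or lies in $\Phi(P)\le Z(G)$ by~(2.2); either way $H\lhd G$ and hence is strongly permutable.

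The reverse direction of part~(3) follows from~(1). For the forward direction, assume the hypothesis but $G$ non-supersoluble; by~(2.3), every prime-order subgroup and every cyclic subgroup of order~$4$ is normal in $G$. A normal subgroup of order $p$ in $P$ lies in $Z(P)$. If $P$ is abelian (elementary abelian by Lemma~\ref{ls1}(1.1)), irreducibility of $\overline Q$ forces $m=1$, contradicting non-supersolubility. If $P$ is nonabelian, $Z(P)=\Phi(P)$, and Lemma~\ref{ls1}(1.3) gives $P$ of exponent $p$ (for $p$ odd) or exponent at most~$4$ (for $p=2$); in the odd case every element of $P$ lies in $\Phi(P)$, while in the case $p=2$, a normal cyclic $\langle x\rangle$ of order~$4$ is centralized by $Q$ (as $|\mathrm{Aut}(C_4)|=2$ is coprime to $|Q|$), so $x\in C_P(Q)=\Phi(P)$, and both involutions and order-$4$ elements lie in $\Phi(P)$. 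In every case $P=\Phi(P)$, a contradiction. The main technical step is the elimination of $\langle z\rangle H=G$ in~(2.2) via the reduction to $\overline G$ and the vanishing of $C_{\overline P}(\overline Q)$; the rest reduces to the classification of maximal subgroups in Lemma~\ref{ls1}(2).
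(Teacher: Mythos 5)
Your proof is correct, and for the two substantive parts it takes a genuinely different route from the paper. For part (2.2) the paper's argument stays inside $G$: it uses Lemma~\ref{lq} to show that if $P_G(H)=G$ then some generating cyclic subgroup $\langle x_j\rangle$ has order divisible by $|Q|$, writes $\langle x_j\rangle=\langle u\rangle\times Q^g$ with $u\in\Phi(P)$, and reads off $H=P$ or $H\le\Phi(P)$ from $\langle x_j\rangle H$. You instead pass to $\overline{G}=G/\Phi(G)$ and use $C_{\overline{P}}(\overline{Q})=1$ to kill the possibility $\langle z\rangle H=G$ for $H\ne P$, after which the maximal-subgroup classification of Lemma~\ref{ls1}\,(2) traps every permuting cyclic subgroup inside $P\times\langle y^q\rangle$; this avoids Lemma~\ref{lq} altogether and is arguably more transparent, at the modest cost of invoking the standard facts $\Phi(G)\cap P=\Phi(P)$ and $\Phi(P)\le Z(G)$ (both of which do follow from Lemma~\ref{ls1}\,(2), as you implicitly use). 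For part (3) the paper deduces $|P|=p$ directly from (2.2) applied to a cyclic subgroup generated by an element of $P\setminus\Phi(P)$ (of order $p$, or $2$ or $4$ when $p=2$, by Lemma~\ref{ls1}\,(1.3)); you route through (2.3) to get normality of all such subgroups and then run a case analysis on $P$ abelian versus nonabelian using $\mathrm{Aut}(C_p)$ and $\mathrm{Aut}(C_4)$ — longer, but it supplies the details the paper leaves terse, and your treatment of the $p=2$, order-$4$ case via $C_P(Q)=\Phi(P)$ is exactly the point that makes the hypothesis on order-$4$ subgroups necessary. Parts (1), (2.1) and (2.3) follow essentially the paper's own line (enumeration of subgroups when $|P|=p$, maximality of $\Phi(P)\times Q$, and normality of proper permutable primary subgroups via $\langle y^q\rangle,\Phi(P)\le Z(G)$).
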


\begin{proof}
In view of Lemma~\ref{ls1}\,(2), maximal subgroups of $G$ are reduced to
$N_G(Q^g)=\Phi(P)\times Q^g$, $g\in G$ and  $P\times \langle y^q\rangle\lhd G$,
$\langle y\rangle =Q$.

$(1)$ Let $G=P\rtimes Q$ be a supersoluble Schmidt group.
Then $|P|=p$ and $q$ divides $p-1$, where $|Q|=q^b$ by Lemma~\ref{ls2}.
It is clear that $P$ and $Q$ are strongly permutable in~$G$. Let $Q_1\lessdot \, Q$.
Note that $P\times Q_1$ is cyclic and normal in $G$. Hence all subgroups of $P\times Q_1$
is normal in $G$ and strongly permutable.

$(2)$ Let $G=P\rtimes Q$ be a non-supersoluble Schmidt group.

$(2.1)$ Suppose that there is $x\in G\setminus N_G(Q)$ such that
$\langle x \rangle Q=Q\langle x \rangle$.
Let $\langle x\rangle=\langle a\rangle\langle b\rangle$,
where $\langle a\rangle$ is a Sylow $p$-subgroup and
$\langle b\rangle$ is a Sylow $q$-subgroup of $\langle x\rangle$.
Since $\langle b\rangle Q=Q$ according to~\cite[VI.4.7]{hup},
\[
\langle x \rangle Q=\langle a\rangle Q=\langle a\rangle \rtimes Q,
\ a\in N_G(Q)=\Phi(P)\times Q,
\]
So, $x\in N_G(Q)$ and $N_G(Q)=P_G(Q)\lessdot \,G$.

$(2.2)$ Assume that $H\leq P$ and $P_G(H)=G$. Since $P$ is normal in $G$, then $P_G(P)=G$.
Let
\[
H<P, \ x_i\in P_G(H), \ \langle x_i\rangle H=H\langle x_i\rangle,
\ i=1,\ldotp,n,
\ P_G(H)=\langle x_1,x_2,\ldots , x_n\rangle .
\]
If $|Q|$ does not divide $|x_i|$ for every~$i$,
then $P_G(H)\leq P\times \langle y^q\rangle<G$ by Lemma~\ref{lq},
a contradiction with the hypothesis. Therefore there is $x_j$ such that
$\langle x_j\rangle H=H \langle x_j\rangle$ and $|Q|$ divides $|x_j|$.
Hence $\langle x_j\rangle=\langle u\rangle \times Q^g$ for some $u\in P$ and $g\in G$.
Since $\langle x_j\rangle <G$,
\[
\langle x_j\rangle=\langle u\rangle \times Q^g\le
\Phi (P) \times Q^g.
\]
Hence $u\in\Phi(P)$. If $\langle x_j\rangle H=G$,  then $\langle u\rangle H=P$ and $H=P$,
a contradiction. So, $\langle x_j\rangle H<G$ and
$\langle x_j\rangle H\leq \Phi(P)\times Q^g$, hence $H\leq\Phi(P)$.
Since $P$ is normal in $G$, we conclude $H\leq\Phi(P)\leq \Phi(G)$.

$(3)$ Suppose that in a Schmidt group~$G=P\rtimes Q$, every cyclic subgroup
of prime order and cyclic subgroup of order~4 is strongly permutable. Statement~$(2.2)$
implies  $|P|=p$ and~$G$ is supersoluble.
Conversely, if~$G$ is a supersoluble Schmidt group, then by Statement~$(1)$,
every subgroup of prime order and every cyclic subgroup of order~4 is strongly permutable.
\end{proof}

\begin{lemma}\label{expp}
 Let $H$ be a  $p$-group of exponent~$p$ and $x\notin Z(H)$.
 Then $N_H(\langle x\rangle)=P_H(\langle x\rangle)$ and
 $\langle x\rangle$ is not permutable in $H$.
\end{lemma}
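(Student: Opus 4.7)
The plan is to prove both claims directly from the observation that in a group of exponent $p$, every nontrivial cyclic subgroup has order exactly $p$, so any permuting pair of such subgroups generates a group of order at most $p^{2}$.

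First I would establish $P_H(\langle x\rangle)\le N_H(\langle x\rangle)$; the reverse inclusion is always true (see \cite[p.~26]{Wein1982}). Take $y\in H$ such that $\langle y\rangle\langle x\rangle=\langle x\rangle\langle y\rangle$. Since $H$ has exponent $p$, both $\langle y\rangle$ and $\langle x\rangle$ are of order $1$ or $p$, and the intersection $\langle x\rangle\cap\langle y\rangle$ is either trivial or equal to one of them (as $p$ is prime). Hence the product $\langle x\rangle\langle y\rangle$ is a subgroup of order $1$, $p$, or $p^{2}$. If the order is at most $p$ then $y\in\langle x\rangle\subseteq N_H(\langle x\rangle)$; if the order is $p^{2}$, then $\langle x\rangle$ has index $p$ in $\langle x\rangle\langle y\rangle$ and is therefore normal there, so again $y\in N_H(\langle x\rangle)$. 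This yields $P_H(\langle x\rangle)=N_H(\langle x\rangle)$.

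For the second claim, suppose $\langle x\rangle$ were permutable in $H$. Then $H=P_H(\langle x\rangle)=N_H(\langle x\rangle)$, so $\langle x\rangle\lhd H$. Since $H$ is a $p$-group and $\langle x\rangle$ has order $p$, the standard fact that any normal subgroup of order $p$ in a $p$-group lies in the centre (because $H/C_H(\langle x\rangle)$ embeds into $\mathrm{Aut}(\langle x\rangle)$, a group of order $p-1$) forces $x\in Z(H)$, contradicting the hypothesis $x\notin Z(H)$. Hence $\langle x\rangle$ is not permutable in $H$.

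There is no real obstacle here; the argument is a two-line application of the observation that two order-$p$ subgroups which permute generate a group of order $p^{2}$, where a subgroup of index $p$ is automatically normal. The only point requiring care is to treat separately the trivial case $y\in\langle x\rangle$ so that the quotient bound on $|\langle x\rangle\langle y\rangle|$ is applied correctly.
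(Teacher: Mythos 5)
Your proof is correct and follows essentially the same route as the paper: the key point in both is that a cyclic subgroup permuting with $\langle x\rangle$ in an exponent-$p$ group generates with it a subgroup of order at most $p^2$, forcing normalization. The only cosmetic difference is that the paper notes the order-$p^2$ product is abelian (so $y$ centralizes $x$ and $P_H(\langle x\rangle)\le C_H(x)<H$ at once), whereas you pass through normality of the index-$p$ subgroup and then invoke the $N/C$ embedding into $\mathrm{Aut}(\langle x\rangle)$ to reach the same contradiction.
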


\begin{proof}
It is clear that $N_H(\langle x\rangle)\leq P_H(\langle x\rangle)$. Choose
$y\in H\setminus \langle x\rangle$ such that $\langle x\rangle\langle y\rangle=\langle y\rangle\langle x\rangle$.
Since $H$ is a $p$-group of exponent~$p$, we get $|\langle x\rangle\langle y\rangle|=p^2$.
Consequently, $H$ is abelian and
$\langle x\rangle\langle y\rangle=\langle x\rangle\times\langle y\rangle$,
and so $y\in N_H(\langle x\rangle)$ and $N_H(\langle x\rangle)=P_H(\langle x\rangle)$.
As $x\notin Z(H)$, we have $H\neq N_H(\langle x\rangle)=P_H(\langle x\rangle)$
and $\langle x\rangle$ is not permutable in $H$.
\end{proof}

\begin{theorem}\label{tc}
 If all primary cyclic subgroups of a group $G$ are
 strongly permutable, then~$G$ is supersoluble.
\end{theorem}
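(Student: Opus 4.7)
The plan is to argue by induction on $|G|$, examining a minimal counterexample $G$. First I would verify that the hypothesis is inherited by subgroups (this is immediate from Lemma~\ref{PerProp}\,(3)) and by quotients: given $N\lhd G$ and a primary cyclic $\langle\bar y\rangle\le G/N$ of $q$-power order, I lift by taking the $q$-part $\langle y_q\rangle$ of a preimage of $\bar y$, which is a primary cyclic subgroup of $G$, and use Lemma~\ref{PerProp}\,(1) to transfer its strong permutability in $G$ to strong permutability of $\langle y_q\rangle N/N=\langle\bar y\rangle$ in $G/N$. Then in a minimal counterexample every proper subgroup and every proper quotient of $G$ is supersoluble, so Lemma~\ref{PrimitiveGr} applied to the saturated formation $\mathfrak{U}$ forces $G$ to be primitive.

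Next I would invoke Lemma~\ref{minu}\,(1) to get that $G$ is soluble, so primitivity delivers the standard decomposition $G=P\rtimes M$ with $P=F(G)=C_G(P)=O_p(G)$ the unique minimal normal subgroup of $G$. Since $\Phi(P)$ is characteristic in $P\lhd G$ and strictly smaller than $P$, uniqueness of the minimal normal subgroup gives $\Phi(P)=1$, so $P$ is elementary abelian; Lemma~\ref{minu}\,(4) then yields $|P|>p$.

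The key step is to pick any nontrivial primary cyclic subgroup $\langle z\rangle\le G$ of $q$-power order, note that strong permutability entails permutability, and then invoke Ore's classical theorem (every permutable subgroup of a finite group is subnormal) to conclude that $\langle z\rangle$ is subnormal in $G$. A short induction on subnormal defect---using that for a subnormal chain $\langle z\rangle\lhd\cdots\lhd H\lhd G$, the characteristic subgroup $O_q(H)$ of $H$ is a normal $q$-subgroup of $G$ and so lies inside $O_q(G)$---then gives $\langle z\rangle\le O_q(G)$. Because $F(G)=P$ is a $p$-group, $O_q(G)=1$ for $q\ne p$ and $O_p(G)=P$, so in either case $\langle z\rangle\le P$. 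Since every element of $G$ is a product of its commuting primary components, this forces $G=P$, which is abelian and hence supersoluble, contradicting the choice of $G$.

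The main obstacle will be justifying the use of Ore's theorem, which is not stated among the preliminaries of this excerpt; I would invoke it as a classical result. If a self-contained argument is preferred, it can be replaced by a longer structural analysis combining Lemma~\ref{ls} on Schmidt subgroups of $G$ with Lemma~\ref{PG=NG} applied to a Sylow subgroup at the maximum prime of $\pi(G)$, but the Ore-based route is essentially a single sentence.
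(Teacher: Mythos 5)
Your reduction to a primitive minimal non\nobreakdash-supersoluble group $G=P\rtimes M$ with $P=F(G)=C_G(P)=O_p(G)$ elementary abelian is sound, and your lifting of the hypothesis to quotients via the $q$-part of a preimage works. The proof breaks at the key step. In this paper ``permutable'' does \emph{not} mean quasinormal: $H$ is permutable in $G$ when the permutizer $P_G(H)$, the subgroup generated by the cyclic subgroups of $G$ that permute with $H$, equals $G$; and $H$ is strongly permutable when $P_U(H)=U$ for every intermediate subgroup $U$. This is strictly weaker than permuting with every subgroup of $G$, which is the hypothesis of Ore's theorem. The introduction makes the distinction explicit: a Sylow $2$-subgroup of $S_3$ is strongly permutable but not quasinormal --- and it is visibly not subnormal either. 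Indeed $S_3$ is a supersoluble Schmidt group, so by Lemma~\ref{ls}\,(1) \emph{every} subgroup of $S_3$ is strongly permutable, yet the subgroup of order $2$ is self-normalizing and lies outside $F(S_3)$. Hence the chain ``strongly permutable $\Rightarrow$ quasinormal $\Rightarrow$ subnormal $\Rightarrow$ contained in $O_q(G)$'' fails at the first arrow, and with it your conclusion that every primary cyclic subgroup sits inside $P$, so that $G=P$.

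This is not a repairable slip but the crux of the theorem: if strong permutability implied quasinormality, the statement would be an immediate corollary of classical results. The paper instead shows that the minimal counterexample is a minimal non-supersoluble group which cannot be a Schmidt group (Lemma~\ref{ls}\,(3)), hence has a Sylow tower with normal Sylow $p$-subgroup $P$ of exponent $p$ for $p=\max \pi(G)$; Lemma~\ref{expp} then forces $P$ to be elementary abelian and a minimal normal subgroup with $\Phi(G)=1$, and a case analysis on the complement $S$ (cyclic primary, supersoluble Schmidt, or a Sylow subgroup with a cyclic maximal subgroup) extracts, from the permutizer condition on a suitable subgroup $A\le P$ of order $p$, elements that normalize nontrivial intersections of the form $S\cap S^{b}$ and so produce a nontrivial normal subgroup in the complement --- a contradiction in each case. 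Your closing sentence gestures at this route, but none of that analysis is actually carried out in the proposal.
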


\begin{proof}
We use induction on the group order.
In view of Lemma~\ref{PerProp}\,(3) and by induction,
all proper subgroups of $G$ are supersoluble.
Hence $G=P\rtimes S$ is a minimal non-supersoluble group,
$P=G^\mathfrak{U}$. By Lemma~\ref{ls}\,(3), $G$ is not a Schmidt group,
therefore $G$ has a Sylow tower of supersoluble type and
$P$ is a Sylow $p$-subgroup of $G$ for $p=\max \ \pi(G)$.
In particular, $p>2$ and all nontrivial elements in $P$ are of order~$p$
by Lemma~\ref{minu}.

From Lemma~\ref{expp} it follows that $P$ is an elementary abelian $p$-subgroup,
and by Lemma~\ref{minu}, $P$ is a minimal normal subgroup in~$G$.

Assume that $N$ is a normal subgroup of~$G$, $N\neq 1$,
and $V/N$ is a cyclic $t$-subgroup, $t\in\pi(G)$.
Let $U$ be a subgroup of least order such that $U\leq V$, $UN=V$.
Then $U\cap N\leq\Phi(U)$, $V/N=UN/N\cong U/U\cap N$,
therefore $U$ is a cyclic $t$-subgroup. By the hypothesis,
$U$ is strongly permutable in $G$, and by Lemma~\ref{PerProp}\,(1),
$V/N$ is strongly permutable in $G/N$. By induction, $G/N$ is supersoluble,
hence $\Phi(G)=1$. From Lemma~\ref{minu}\,(5) it follows that $S$ is
either a cyclic primary group or supersoluble Schmidt group,
and~$O_{p^\prime}(G)=1$.

Let $S$ be either a cyclic primary group or $|\pi(S)|=2$.
In that case, all Sylow subgroups of $S$ are cyclic.
Assume that $A\leq P$, $|A|=p$ and $g\in G\setminus N_G(A)$ such that
$\langle g\rangle A\leq G$. Since $G=PS$, we conclude $g=bx$, $b\in P$, $x\in S$
and $\langle g\rangle=\langle b\rangle\times\langle x\rangle$.
If $x=1$, then $g=b\in P\leq N_G(A)$, a contradiction.
If $b=1$, then $g=x$ and $\langle g\rangle A=A\rtimes \langle g\rangle$,
since $\langle g\rangle A$ is $p$-closed. So, $g\in N_G(A)$, a contradiction.
Thus, $b\neq 1$, $x\neq 1$, $S^b\neq S$, $x^b=x\in S\cap S^b=D\neq 1$.

If $S$ is abelian, then $D\lhd \langle S,S^b\rangle=G$, $D\leq O_{p'}(G)=1$,
a contradiction. Therefore $S$ is not abelian and $S=Q\rtimes R$
is supersoluble Schmidt group by Lemma~\ref{minu} in view of~$\Phi(G)=1$.
Now, $|Q|=q$, $R=\langle y\rangle$ is an $r$-subgroup, $r$ divides $q-1$ and $y^r\in Z(S)$.

If $q$ divides $|D|$, then $Q\leq D$ and $Q\lhd\langle S,S^b\rangle =G$, a contradiction.
So, $D$ is an $r$-subgroup. If $y^r\neq 1$, then $D_1=D\cap \langle y^r\rangle\neq 1$ and
$D_1\lhd \langle S,S^b\rangle =G$, a contradiction. Consequently,
$y^r=1$, $D=\langle y\rangle=R$ and $|R|=r$.
Since $D=\langle x\rangle$, we get $\langle x\rangle=R\leq N_G(\langle b\rangle)$.

If $\langle b\rangle\lhd PQ$, then $N_G(\langle b\rangle)\geq\langle PQ,R\rangle=G$
and $\langle b\rangle\lhd G$, a contradiction. Hence $\langle b\rangle$
is not normal in $PQ$ and there is $u\in PQ\setminus N_{PQ}(\langle b\rangle)$
such that $\langle b\rangle\langle u\rangle=\langle u\rangle\langle b\rangle\leq PQ$.
Let $u=cf$, $c\in P$, $f\in Q$. If $c=1$, then
$\langle b\rangle\langle u\rangle=\langle b\rangle\rtimes\langle f\rangle$
and $u=f\in N_{PQ}(\langle b\rangle)$, a contradiction. If $f=1$,
then $u=c\in P\leq N_{PQ}(\langle b\rangle)$, a contradiction.
Consequently,
$c\neq 1$, $f\neq 1$ and $\langle u \rangle=\langle c\rangle\times \langle f\rangle$, $\langle f\rangle=Q$, $Q=Q^c\leq S\cap S^c$ and $S^c\neq S$, since $c\in P$,
$c\notin N_G(S)=S$. But now $Q\lhd \langle S,S^c\rangle =G$,
a contradiction.

Finally, we consider the case when $S=R$ is noncyclic Sylow $r$-subgroup of $G$.
Assume that in $S$, there is a cyclic subgroup $Z$ of index~$r$.
Let $A\leq P$ such that  $|A|=p$ and let  $g\in G\setminus N_G(A)$ such that
$\langle g\rangle A\leq G$. Since $G=PR$, we deduce $g=bx$, $b\in P$, $x\in R$ and
$\langle g\rangle=\langle b\rangle\times\langle x\rangle$.
If $b=1$, then $g=\langle x\rangle$ and $\langle g\rangle A=A\rtimes \langle g\rangle$,
since $\langle g\rangle A$ is $p$-closed and $A$ is a Sylow $p$-subgroup
of $\langle g\rangle A$. Hence $g\in N_G(A)$, a contradiction.
If $x=1$, then $g=b\in P\leq N_G(A)$. This contradicts with the choice of~$g$.
So, $b\neq 1$, $x\neq 1$, $R^b\neq R$, $x^b=x\in R\cap R^b$.

If $\langle x_1\rangle=\langle x\rangle\cap Z\neq 1$,
then $\langle x_1\rangle\lhd R$. Since $x_1^b=x_1\in Z^b\lhd R^b$,
we get $\langle x_1\rangle\lhd R^b$.
Therefore $\langle x_1\rangle\lhd \langle R,R^b\rangle=G$.
This contradicts with $R_G=1$.
So, $\langle x\rangle\cap Z=1$ and $R=Z\rtimes\langle x\rangle$,
$|x|=r$, $x\in C_G(\langle b\rangle)\leq N_G(\langle b\rangle)$.

If $\langle b\rangle\lhd PZ$, then $\langle b\rangle\lhd G$,
a contradiction. Thus, $N_{PZ}(\langle b\rangle)<PZ$
and there is $u\in PZ\setminus N_{PZ}(\langle b\rangle)$
such that $\langle u\rangle\langle b\rangle\leq G$.
Since $u\in PZ$, we conclude $\langle u\rangle=\langle c\rangle\times\langle y\rangle$,
$\langle c\rangle\leq P$, $\langle y\rangle\leq Z$. Verification shows that $c\neq 1$,
$y\neq 1$. From $N_G(Z)=R$ it follows that $Z^c\neq Z$ and $y^c=y\in Z\cap Z^c$.
Now, $\langle y\rangle\lhd \langle R,R^c\rangle=G$, a contradiction.

If $S$ is not an abelian group of order~$r^3$ and exponent~$r$,
then by Lemma~\ref{expp}, $S$ contains a nonpermutable cyclic primary subgroup,
which contradicts with the choice of~$G$.
\end{proof}

\begin{theorem}\label{tcp}
 If every primary cyclic subgroup of a group $G$ is
 $\mathbb{P}$\nobreakdash-\hspace{0pt}subnormal or strongly permutable,
 then~$G\in \rm{v}\mathfrak U$.
\end{theorem}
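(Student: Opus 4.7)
The plan is a minimal-counterexample argument that parallels the proof of Theorem~\ref{tc} but keeps track of which primary cyclic subgroups are $\mathbb{P}$-subnormal and which are only strongly permutable. Let $G$ be a group of smallest order satisfying the hypothesis but with $G\notin\mathrm{v}\mathfrak{U}$.

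First I would verify that the hypothesis descends to proper quotients. Given $1\neq N\lhd G$ and a primary cyclic $H/N\le G/N$, pick $H_0\le H$ of minimum order with $H_0N=H$; then $H_0\cap N\le\Phi(H_0)$, so $H_0$ is itself primary cyclic. Whichever of $\mathbb{P}$-subnormality or strong permutability $H_0$ enjoys in $G$, Lemma~\ref{KPProp}(1) or Lemma~\ref{PerProp}(1) transfers it to $H/N$ in $G/N$. By minimality $G/N\in\mathrm{v}\mathfrak{U}$ for every nontrivial normal subgroup $N$, and since $\mathrm{v}\mathfrak{U}$ is a saturated formation, Lemma~\ref{PrimitiveGr} forces $G$ to be primitive: $\Phi(G)=1$, $G=N\rtimes M$ with $N$ the unique minimal normal subgroup and $M_G=1$.

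Next I would dispose of the two pure regimes. If every primary cyclic subgroup of $G$ were strongly permutable, Theorem~\ref{tc} would make $G$ supersoluble and hence $G\in\mathrm{v}\mathfrak{U}$ by Lemma~\ref{UPsn}; if every primary cyclic subgroup were $\mathbb{P}$-subnormal, $G\in\mathrm{v}\mathfrak{U}$ by definition. Both contradict the choice of $G$, so both behaviours must genuinely occur. I would then show $N$ is abelian: were $N=T_1\times\cdots\times T_k$ a product of non-abelian simple factors, a primary cyclic subgroup of prime order inside some $T_i$ would have to be $\mathbb{P}$-subnormal or strongly permutable in $G$, and combining this with the classification in Theorem~\ref{th0} (together with Lemma~\ref{KPMax} constraining the maximal prime and Lemma~\ref{PG=NG} pinning the normaliser of a Sylow) excludes every admissible $T_i$. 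This socle reduction is the principal obstacle, because the hypothesis here concerns only cyclic primary subgroups rather than full Sylow subgroups as in Theorem~\ref{th0}, so that argument must be rerun one cyclic element at a time.

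Once $N$ is elementary abelian and $G$ is primitive soluble with $N=C_G(N)=O_p(G)=F(G)$, Lemma~\ref{KPProp}(2) and Lemma~\ref{PerProp}(3) let the hypothesis descend to every proper subgroup of $G$, so proper subgroups lie in $\mathrm{v}\mathfrak{U}$ by induction; in particular $M\cong G/N\in\mathrm{v}\mathfrak{U}$. I would conclude by building, for each primary cyclic $\langle x\rangle\le G$, a prime-index chain from $\langle x\rangle$ to $G$. When $|N|=p$, the chain in $M$ (available since $M\in\mathrm{v}\mathfrak{U}$) is extended by the prime step $M\le G$, and cyclic subgroups of $N$ are handled by $\langle x\rangle\le N$ directly. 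When $|N|=p^n$ with $n>1$, Lemma~\ref{KPMax} forces a cyclic subgroup of $N$ of order $p$ that is $\mathbb{P}$-subnormal in $G$ to satisfy $p=\max\pi(G)$, so any such $\langle x\rangle$ that fails $\mathbb{P}$-subnormality must be strongly permutable; the irreducibility of $M$ on $N$ combined with this strong permutability exposes a forbidden nontrivial proper $M$-invariant subgroup of $N$, while the $\mathbb{P}$-subnormal pieces are stitched via Lemma~\ref{KPProp}(3) with the chain in $M$. This yields $G\in\mathrm{v}\mathfrak{U}$, contradicting the choice of $G$.
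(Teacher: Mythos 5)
Your reduction to a primitive minimal counterexample and the quotient-inheritance argument are fine, but the proposal has two genuine gaps. The first is the non-abelian socle case, which you yourself flag as ``the principal obstacle'' and then do not carry out. Theorem~\ref{th0} cannot be invoked here: it classifies simple groups with a $\mathbb{P}$-subnormal or strongly permutable \emph{Sylow} subgroup, whereas your hypothesis only controls primary \emph{cyclic} subgroups, and ``rerunning the argument one cyclic element at a time'' is precisely the work that needs to be done. The paper closes this case with a different and quite short observation: if some primary cyclic subgroup is $\mathbb{P}$-subnormal, then the trivial subgroup is $\mathbb{P}$-subnormal in $G$, so by Kazarin and Cameron--Solomon a simple $G$ must be one of $L_2(7)$, $L_2(11)$, $L_3(3)$, $L_3(5)$, $L_2(2^m)$ with $2^m+1$ prime; in each of these the Sylow subgroup for $r=\max\pi(G)$ is itself cyclic, hence falls under the hypothesis, and Lemmas~\ref{KPMax} and~\ref{PG=NG} then force it to be normal, a contradiction. (If no primary cyclic subgroup is $\mathbb{P}$-subnormal, Theorem~\ref{tc} applies.) Without this or an equivalent argument your socle reduction does not go through.

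The second gap is the endgame for abelian socle with $|N|=p^n$, $n>1$. Your claim that Lemma~\ref{KPMax} ``forces'' a $\mathbb{P}$-subnormal subgroup of order $p$ to satisfy $p=\max\pi(G)$ is not what that lemma says (it only yields $O_r(H)\le O_r(G)$ for the maximal prime $r$, and gives no constraint on $p$), and the assertion that strong permutability of a subgroup of order $p$ in $N$ ``exposes'' a proper nontrivial $M$-invariant subgroup of $N$ is not substantiated --- compare Lemma~\ref{ls}\,(2.2), where extracting such a conclusion even in the special case of Schmidt groups takes real work. The paper avoids this entirely: once $G$ is known to be soluble with all proper subgroups in $\mathrm{v}\mathfrak{U}$, it quotes the structure theorem for minimal non-$\mathrm{v}\mathfrak{U}$-groups \cite[Theorem~B\,(4)]{mkRic} to get $G=R\rtimes Q$ biprimary with $Q$ cyclic, and then Corollary~\ref{BiU} gives supersolubility at once. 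You would either need to import that structure theorem as well, or supply a genuine argument where your sketch currently gestures.
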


\begin{proof}
We use induction on the group order.
Let $N$ be a normal subgroup of a group $G$, $N\neq 1$,
and let $\langle a\rangle$ be a cyclic primary subgroup of $N$.
By the choice of $G$, $\langle a\rangle$ is $\mathbb{P}$-subnormal
or strongly permutable in $G$. If $\langle a\rangle$ is
$\mathbb{P}$-subnormal in $G$, then by Lemma~\ref{KPProp}\,(1),
$\langle a\rangle$ is $\mathbb{P}$-subnormal in $N$.
If $\langle a\rangle$ is strongly permutable in $G$,
then by Lemma~\ref{PerProp}\,(3), $\langle a\rangle$ is strongly permutable in $N$.
Now assume that $A/N$ is a cyclic $t$-subgroup, $t\in\pi(G)$.
Let $B$ be a subgroup of least order such that $B\leq A$, $BN=A$.
Then $B\cap N\leq\Phi(B)$, $A/N=BN/N\cong B/B\cap N$,
hence $B$ is a cyclic $t$-subgroup. By the choice of $G$,
$B$ is $\mathbb{P}$-subnormal or strongly permutable in $G$.
As $A/N=BN/N$, according to Lemma~\ref{KPProp}\,(1) and Lemma~\ref{PerProp}\,(1),
$A/N$ is $\mathbb{P}$-subnormal or strongly permutable in $G$.
Thus the hypothesis holds for all normal subgroups of $G$
and all quotients subgroups.

Suppose that $G$ is a simple group. If every primary cyclic subgroup of $G$ is
strongly permutable, then $G$ is supersoluble by Theorem~\ref{tc}.
Consequently, $G$ contains a cyclic primary subgroup $A$ such that
$A$ is $\mathbb{P}$-subnormal in $G$. Since the unit subgroup is
$\mathbb{P}$-subnormal in~$A$, then it is $\mathbb{P}$-subnormal in~$G$.
According to~\cite{kaz}, \cite[p. 342]{cs},
$G$ is isomorphic to one of the following groups.
\[
L_2(7), \ L_2(11), \ L_3(3), \ L_3(5), \ L_2(2^m),
2^m+1 \mbox{ is prime}.
\]
In every of these groups, a Sylow $r$-subgroup $R$ is cyclic for $r=\max \ \pi(G)$.
By the choice of $G$, $R$ is $\mathbb{P}$-subnormal or strongly permutable in $G$.
If $R$ is $\mathbb{P}$-subnormal in $G$, then by Lemma~\ref{KPMax},
$R$ is normal in $G$. If $R$ is strongly permutable in $G$,
then in view of Lemma~\ref{PG=NG}, $R$ is normal in $G$.
Consequently, $R$ is normal in $G$ and $G$ is not a simple group, a contradiction.

Thus in $G$, there is a normal subgroup $N$, $N\neq 1$, and by induction,
$G/N\in\mathrm{v}\mathfrak{U}$ and $N\in\mathrm{v}\mathfrak{U}$.
Hence $G$ is soluble. In view of Lemma~\ref{KPProp}\,(2) and by induction,
every proper subgroup of $G$ belongs to~$\mathrm{v}\mathfrak{U}$ and
$G$ is a minimal non-$\mathrm{v}\mathfrak{U}$-group.
According to~\cite[Theorem~B\,(4)]{mkRic},
$G$ is a biprimary minimal non-supersoluble group
in which non-normal Sylow subgroups are cyclic.
Hence $G=R\rtimes Q$ is a group such that a Sylow $r$-subgroup $R$ is normal in $G$
and a Sylow $q$-subgroup $Q$ is  cyclic and $\mathbb{P}$-subnormal or strongly permutable
in~$G$ by the choice of~$G$. By  Corollary~\ref{BiU},
$G\in \mathfrak{U}\subseteq \mathrm{v}\mathfrak{U}$.
\end{proof}

\begin{example}
In $A_4$, every subgroup of order~2 is $\mathbb{P}$-subnormal,
but it is not permutable.
\end{example}

\begin{example}
In $L_2(7)$, every subgroup of order~3 is permutable,
but it is not $\mathbb{P}$-subnormal.
\end{example}



\end{document}